\documentclass{amsart}
\usepackage{url}

\usepackage{datetime}
\usepackage{amsmath,amsfonts,amsthm,amssymb}

\newtheorem{theorem}{Theorem}
\newtheorem{proposition}[theorem]{Proposition}

\newtheorem{lemma}[theorem]{Lemma}

\newtheorem{remark}[theorem]{Remark}
\numberwithin{equation}{section} \numberwithin{theorem}{section}
\newcommand{\R}{\mathbb{R}}
\newcommand{\N}{\mathbb{N}}

\newcommand{\C}{\mathbb{C}}

\renewcommand{\l}{\lambda}
\newcommand{\z}{\zeta}

\newcommand{\mint}{\mathop{\int\hspace{-1.05em}{\--}}\nolimits}

\newcommand{\ep}{\varepsilon} 
\newcommand{\al}{\alpha} 
\newcommand{\Ea}{E_{\al}} 
\newcommand{\ua}{u_{\al}} 
\newcommand{\ula}{u_{\lambda}}

\DeclareMathOperator{\Div}{div}

\begin{document}
\title[Limits of $\al$-harmonic maps]{Limits of $\al$-harmonic maps}

\author{Tobias Lamm}
\address[T.~Lamm]{Institute for Analysis\\ 
Karlsruhe Institute of Technology (KIT)\\ 
Englerstr. 2\\ 76131 Karlsruhe\\ Germany}
\email{tobias.lamm@kit.edu}
\author{Andrea Malchiodi}
\address[A.~Malchiodi]{Scuola Normale Superiore\\ 
Piazza dei Cavalieri 7\\ 
50126 Pisa\\ Italy}
\email{andrea.malchiodi@sns.it}
\author{Mario Micallef}
\address[M.~Micallef]{Mathematics Institute\\ 
University of Warwick\\ 
Coventry CV4 7AL\\ UK}
\email{M.J.Micallef@warwick.ac.uk}

\date{\today}

\subjclass[2000]{}

\begin{abstract} 
Critical points of approximations of the Dirichlet energy \`{a} la Sacks-Uhlenbeck 
are known  to converge to harmonic maps in a suitable sense. 
However, we show that not every harmonic map can be approximated 
by critical points of such perturbed energies. Indeed, we prove that 
constant maps and the rotations of $S^2$ are the only critical points of $\Ea$ 
for maps from $S^2$ to $S^2$ whose $\al$-energy lies below some threshold. 
In particular, nontrivial dilations (which are harmonic) 
cannot arise as strong limits of $\al$-harmonic maps. 
\end{abstract}

\maketitle
\section{Introduction}

Let $(M^2,g)$ and $(N^n,h)$ be smooth, compact 
Riemannian manifolds without boundary and 
let $N$ be isometrically embedded into some $\R^k$. 
(The dimension of $M$ is two and that of $N$ is arbitrary.) 
For every $u\in W^{1,2}(M,N)$ the Dirichlet energy $E(u)$ is defined by 
\begin{equation}
  E(u) = \frac{1}{2}\int_M |\nabla u|^2 \, dA_M = 
  \int_M e(u) \, dA_M,\label{dirichlet}
\end{equation} 
where $e(u)=\frac12 |\nabla u |^2$ is the energy density of $u$.

In a pioneering paper, \cite{sacks81}, 
Sacks and Uhlenbeck introduced, for every $\al>1$ and 
every $u\in W^{1,2\al}(M,N)$, the functional 
$\Ea(u)=\frac{1}{2}\int_M (1+|\nabla u|^2)^\al \, dA_M.$ 
For us, it shall be more convenient to define 
\begin{equation}
  \Ea(u)=\frac{1}{2}\int_M (2+|\nabla u|^2)^\al \, dA_M.\label{energy}
\end{equation}
Critical points of $\Ea$ are called $\al$-harmonic maps and 
they solve the elliptic system
\begin{equation}
  \text{div}\Big((2+|\nabla u|^2)^{\al-1}\nabla u \Big) + 
  (2+|\nabla u|^2)^{\al-1}A(u)(\nabla u,\nabla u)=0,\label{EL}
\end{equation}
where $A$ is the second fundamental form of the embedding 
$N\hookrightarrow \R^k$. 
Critical points of $\Ea$ are smooth (see \cite{sacks81}) and 
therefore we can differentiate the equation \eqref{EL} to get
\begin{equation}
  \Delta u+A(u)(\nabla u,\nabla u) = -2(\al-1) (2+|\nabla u|^2)^{-1} 
  \langle \nabla^2 u, \nabla u \rangle \nabla u. \label{EL2}
\end{equation} 
By a remarkable result of H\'{e}lein, \cite{hel91}, 
critical points of $E$ also turn out to be smooth and satisfy 
$$ 
  \Delta u+A(u)(\nabla u,\nabla u) = 0. 
$$ 
In \cite{sacks81}, Sacks and Uhlenbeck showed that, 
as $\al \downarrow 1$, a sequence of $\al$-harmonic maps 
with uniformly bounded energy converges, away from 
a finite (possibly empty) set of points $p_1, \dotsc ,p_{\ell}$, 
to a harmonic map from $M$ to $N$. Furthermore, 
non-trivial bubbles (harmonic maps from the two-sphere $S^2$) develop 
at each of $p_1, \dotsc ,p_{\ell}$. (This is far from a precise statement 
of the convergence that occurs but it suffices for our purposes.) 
It would be useful to associate a Morse index 
to a harmonic map with bubbles. 
An $\al$-harmonic map has a well-defined Morse index 
(see e.g. \cite{micallef88}, \cite{uhlenbeck72}) and so, 
it seems worthwhile to investigate whether 
every harmonic map from a surface can be captured by 
the Sacks-Uhlenbeck limiting process. 
We shall show that this is not the case, 
even when $M$ and $N$ are the round unit two-sphere $S^2 \subset \R^3$. 

In this case the equation \eqref{EL2} simplifies to
\begin{equation}
  \Delta u+u|\nabla u|^2=-2(\al-1) (2+|\nabla u|^2)^{-1} 
  \langle \nabla^2 u, \nabla u \rangle \nabla u. \label{EL3}
\end{equation} 
For $u \colon S^2 \rightarrow S^2$ we can define the degree of $u$ by
\begin{equation}
  \deg (u)=\frac{1}{4\pi} \int_{S^2} J(u) \, dA_{S^2},\label{deg}
\end{equation}
where 
$$ 
  J(u)=u \cdot e_1(u) \wedge e_2(u)
$$ 
is the Jacobian of $u$, and $(e_1, e_2)$ stands for a local oriented orthonormal frame of $TS^2$. 
For every $u \in W^{1,2\al}(S^2,S^2)$ 
with $\deg(u)=1$ we can estimate 
\begin{align}
  8\pi &= \int_{S^2} (1+J(u)) \, dA_{S^2} \nonumber \\
  &\leqslant \int_{S^2} (1+e(u)) \, dA_{S^2} \label{est1} \\ \nonumber 
  &\leqslant (2^{1-\al} \Ea(u))^{\frac{1}{\al}} 
  (4\pi)^{\frac{\al-1}{\al}}. 
\end{align}
Hence we get 
\begin{equation}
  \Ea(u)\geqslant 2^{2\al+1} \pi \label{est2}
\end{equation}
for every $u$ as above.
On the other hand we have for every $R\in SO(3)$ that 
the map $u^R(x)=Rx$ satisfies 
\begin{equation}
  \Ea(u^R) = 2^{2\al + 1}\pi. \label{est3}
\end{equation}
From \eqref{est1} it follows that 
equality in this estimate is attained only for conformal maps $u$ 
with constant energy density equal to $2$. 
Hence the rotations are the only minimizers of $\Ea$ 
among all maps with degree $1$. 
By contrast we have the following theorem due to Wood and Lemaire 
(see (11.5) in \cite{eelm}). 

\begin{theorem} \label{harm:rat} (\cite{eelm})
The harmonic maps between 2-spheres are precisely 
the rational maps and their complex conjugates 
(i.e., rational in $z$ or $\bar{z}$). 
\end{theorem}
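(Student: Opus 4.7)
The plan is to follow the classical Hopf-differential approach. I would first handle the easy direction by noting that rational maps (in $z$ or $\bar z$) are precisely the holomorphic or antiholomorphic self-maps of $S^2 \cong \C \cup \{\infty\}$, and that a $\pm$holomorphic map between K\"ahler manifolds is automatically harmonic: the harmonic map equation in a K\"ahler target, when written in local holomorphic coordinates, is satisfied identically by $\bar\partial u = 0$ (or $\partial u = 0$) thanks to the vanishing of the mixed Christoffel symbols.

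For the harder converse direction, I would introduce the Hopf differential $\phi(z)\,dz^2$ defined by $\phi = u_z \cdot u_z$, where ``$\cdot$'' denotes the $\C$-bilinear extension of the Euclidean inner product on the ambient $\R^3$. The harmonic map equation $\Delta u + u|\nabla u|^2 = 0$, together with the constraint $u \cdot u \equiv 1$ (which yields $u \cdot u_z \equiv 0$), gives a one-line verification that $\partial_{\bar z}\phi = 0$. Since $\phi\,dz^2$ also transforms correctly under holomorphic coordinate changes, it descends to a global holomorphic quadratic differential on $S^2$. The genus-zero surface $S^2$ carries no nonzero holomorphic quadratic differential (any such $\phi$ would be entire in $z$ and $O(z^{-4})$ at infinity), so $\phi \equiv 0$, giving weak conformality of $u$.

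Finally I would upgrade weak conformality plus harmonicity to $\pm$holomorphicity. In a target complex coordinate $w$ with Fubini--Study metric $\lambda(w)|dw|^2$, the condition $\phi = 0$ translates to the pointwise alternative $w_z\,\overline{w_{\bar z}} = 0$, while the harmonic map equation takes the form $w_{z\bar z} = -\Gamma(w)\,w_z\,w_{\bar z}$. Thus $w_z$ satisfies a $\bar\partial$-type equation and $w_{\bar z}$ satisfies a $\partial$-type equation, both of Cauchy--Riemann form with bounded coefficients. A Hartman--Wintner-type unique continuation argument then forces either $w_{\bar z} \equiv 0$ or $w_z \equiv 0$ on all of $S^2$, so that $u$ is globally holomorphic or antiholomorphic, hence rational in $z$ or in $\bar z$. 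I expect the main obstacle to lie precisely in this last dichotomy step: to make the global alternative rigorous one must carefully treat the branch points at which both $w_z$ and $w_{\bar z}$ vanish, and rule out ``mixed'' configurations in which $u$ appears holomorphic on one open subset and antiholomorphic on another by controlling the zero set via the Carleman-type estimates for the associated pseudo-holomorphic equations.
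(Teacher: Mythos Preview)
The paper does not actually prove Theorem~\ref{harm:rat}; it is merely quoted, with attribution to Wood and Lemaire via the Eells--Lemaire report \cite{eelm}, and used as an input to the rest of the argument. So there is no ``paper's own proof'' to compare against.

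That said, your outline is precisely the classical argument behind this result, and it is essentially correct. The only place to be a bit more careful is the final dichotomy. A clean way to finish, avoiding any delicate analysis of branch points, is the following: from the harmonic map equation $w_{z\bar z}+\Gamma(w)\,w_z w_{\bar z}=0$ one sees directly that $\partial_{\bar z}(\lambda^2 w_z)=\lambda^2\big(w_{z\bar z}+(\partial_w\log\lambda^2)\,w_z w_{\bar z}\big)+(\partial_{\bar w}\log\lambda^2)\,\lambda^2 w_z\,\overline{w_z}$, so $h:=\lambda^2 w_z$ satisfies a $\bar\partial$-equation of the form $\partial_{\bar z}h=a\,h+b\,\bar h$ with bounded coefficients; likewise $\lambda^2 w_{\bar z}$ satisfies a $\partial$-equation of the same type. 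By the similarity principle (Bers/Carleman), the zero sets of $w_z$ and of $w_{\bar z}$ are either discrete or all of $S^2$. Since $w_z\,\overline{w_{\bar z}}\equiv 0$ forces these two closed sets to cover $S^2$, at least one of them is all of $S^2$. This replaces the Hartman--Wintner step you flagged and gives the global $\pm$holomorphicity with no extra work at branch points.
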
 
In particular, a rational map $u$ has energy given by
$E(u) = 4 \pi |\text{deg}(u)|$, 
which is the least energy that a map of this degree can have.
As we shall discuss more fully in a moment, 
the rational maps of degree one include dilations 
which are not minimizers of the $\Ea$ energy for $\al \neq 1$. 

\begin{theorem} \label{t:main} 
There exists $\ep > 0$ and $\overline{\al}-1 > 0$  small 
such that the only critical points $\ua$ of $\Ea$ which satisfy 
$\Ea (\ua) \leqslant 2^{2\al +1} \pi + \ep$ and $\al \leqslant \overline{\al}$ 
are the constant maps and the rotations of the form $u^R(x)=Rx$, $R \in SO(3)$. 
\end{theorem}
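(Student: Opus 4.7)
I would argue by contradiction. Suppose there exists a sequence of $\al_k$-harmonic maps $u_k \colon S^2 \to S^2$ with $\al_k \downarrow 1$, $E_{\al_k}(u_k) \leqslant 2^{2\al_k + 1}\pi + \ep_k$ and $\ep_k \to 0$, none of which is a constant or a rotation. Refining the chain \eqref{est1}--\eqref{est2} with $1$ replaced by $|\deg u|$ gives $\Ea(u) \geqslant 2^{\al+1}\pi(|{\deg u}| + 1)^\al$, so the energy cap forces $|{\deg u_k}| \leqslant 1$ for $k$ large, while Jensen applied to $(2+|\nabla u_k|^2)^{\al_k}$ yields $E(u_k) \leqslant 4\pi + o(1)$. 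Next I invoke the Sacks--Uhlenbeck bubble-tree convergence: after a subsequence, $u_k$ converges smoothly away from finitely many points to a harmonic map $u_\infty \colon S^2 \to S^2$ with at most a single bubble attached, and by Theorem \ref{harm:rat} both $u_\infty$ and any bubble carry Dirichlet energy in $4\pi \N$. The energy budget therefore leaves three scenarios compatible with the non-triviality hypothesis: \textup{(A)} $u_k$ converges smoothly to a non-rotation M\"obius map $\phi$ of degree $\pm 1$ with no bubble; \textup{(B)} $u_k$ converges to a constant with a single degree-$\pm 1$ bubble; \textup{(C)} $\deg u_k = 0$ and $u_k$ converges smoothly to a constant.

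Scenario \textup{(C)} is dispatched by local rigidity: at a constant $c$ the linearisation of \eqref{EL3} reduces to the scalar Laplacian on sections of $c^* TS^2$, whose kernel is the two-dimensional space of constants, so an implicit function argument forces $\al_k$-harmonic maps close to $c$ to themselves be constant. Scenario \textup{(B)} I would rule out by means of the sharp expansion
\[
  E_\al(u) = 4\pi + E(u) + (\al - 1)F(u) + O((\al - 1)^2), \qquad F(u) = \tfrac{1}{2}\int_{S^2}(2+|\nabla u|^2)\log(2+|\nabla u|^2)\, dA_{S^2}.
\]
A computation at the concentration scale $\lambda_k$ of the bubble gives $F(u_k) \geqslant -8\pi\log\lambda_k + C$, while $F$ equals $16\pi\log 2$ on rotations; pairing this with the energy cap forces $(\al_k - 1)|\log\lambda_k| \to 0$, and the residual strict inequality $E_\al(b) > 2^{2\al+1}\pi$ for any non-rotation M\"obius $b$ (strict Jensen applied to $t \mapsto (2+t)^\al$) then produces a gap in $E_{\al_k}(u_k) - 2^{2\al_k+1}\pi$ that does not close up, contradicting $\ep_k \to 0$.

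For scenario \textup{(A)} I would perform a Lyapunov--Schmidt reduction on the six-dimensional manifold $\mathcal{M}$ of M\"obius maps of degree $\pm 1$. Since $\mathcal{M}$ is the zero set of the harmonic map operator on this component of the mapping space, and the tangent space at each $\phi \in \mathcal{M}$ coincides with the kernel of the Jacobi operator $L_\phi$ (by the classification of Jacobi fields along conformal harmonic maps on $S^2$), I write $u_k = \phi_k + w_k$ with $\phi_k \in \mathcal{M}$ and $w_k \perp T_{\phi_k}\mathcal{M}$ in $L^2$. Equation \eqref{EL3} then decouples into an infinite-dimensional part, invertible via $L_{\phi_k}^{-1}$ on $(\ker L_{\phi_k})^\perp$, which determines $w_k = O(\al_k - 1)$, and a finite-dimensional projection onto $\ker L_{\phi_k}$ which, to leading order in $\al_k - 1$, asserts that $\phi_k$ is a critical point of $E_{\al_k}$ restricted to $\mathcal{M}$. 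The reduced functional $\phi \mapsto E_\al(\phi)$ on $\mathcal{M}$ is, by the strict Jensen inequality for $t \mapsto (2+t)^\al$ together with the fact that $|\nabla \phi|^2$ is constant only for $\phi \in SO(3)$, strictly minimised on and nondegenerate transverse to the $SO(3)$-orbit of rotations. Hence $\phi_k \in SO(3)$ for $k$ large, and the invertibility of $L_{\phi_k}$ on $(\ker L_{\phi_k})^\perp$ forces $w_k \equiv 0$, i.e.\ $u_k = u^{R_k}$, the desired contradiction.

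The principal obstacle is twofold: in scenario \textup{(B)}, producing a strictly positive lower bound for $E_{\al_k}(u_k) - 2^{2\al_k+1}\pi$ with sharp constants that cleanly separates bubble formation from near-rotation behaviour; and in scenario \textup{(A)}, executing the Lyapunov--Schmidt step uniformly over the non-compact moduli $\mathcal{M}$ (whose dilations can degenerate), while carefully controlling $L_{\phi_k}^{-1}$ in the singular limit $\al_k \downarrow 1$.
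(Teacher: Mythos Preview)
Your overall framework---a Lyapunov--Schmidt reduction onto the M\"obius group $\mathcal{M}$---is in fact the skeleton of the paper's argument, and your scenario (C) is fine. But the split (A)/(B) obscures rather than organises the difficulty: bubbling in (B) is precisely the degeneration $\phi_k \to \partial\mathcal{M}$ in (A), so the ``principal obstacle'' you name at the end is not a side issue but the entire content of the proof. Your argument for (B) has a real gap: you correctly extract $(\al_k-1)|\log\l_k| \to 0$ from the energy cap (this is essentially \eqref{eq:alphalambd}), but the next step---that the strict inequality $\Ea(b) > 2^{2\al+1}\pi$ for non-rotation M\"obius $b$ yields a contradictory gap---fails, because that excess is itself $O(\al-1)$ and vanishes as $\al \to 1$. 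No fixed positive gap survives, so scenario (B) is not ruled out, and you are thrown back onto (A) with $\phi_k$ possibly escaping to infinity in $\mathcal{M}$.

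What the paper supplies, and your sketch lacks, is a quantitative mechanism that forces $\l = 1$. After pulling back so that $v = u_M$ is close to $Id$ (your L-S projection; the paper takes $M$ minimising $\|\nabla(u_M - Id)\|_{L^2}$, which makes the kernel component $\hat\psi_0$ quadratically small, cf.\ \eqref{eq:orth2}), the key observation is that criticality of $v$ for the transformed functional $E_{\al,\l}$ gives $\partial_{\log\l} E_{\al,\l}(v) = 0$, so $\partial_{\log\l} E_{\al,\l}(Id)$ is controlled by $\|\nabla(v - Id)\|$ via \eqref{eq:diff4}. This is paired with the explicit lower bound \eqref{growthenergy} on $\partial_{\log\l} E_{\al,\l}(Id)$, obtained from the closed-form evaluation \eqref{eq:Ea_mtau} of $\Ea(m_\l)$. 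A first pass yields $\log\l \leqslant C(\delta+\al-1)$ (Proposition~\ref{p:diff5}); this bootstraps the $W^{2,p}$ closeness to $\|\hat\psi\|_{W^{2,p}} \leqslant C(\al-1)\log\l$ (estimate \eqref{eq:betterest}), and feeding this back into the same comparison forces $\log\l = 0$ and hence $\hat\psi = 0$. In short, the non-compactness of $\mathcal{M}$ is handled not by soft compactness or bubble-tree dichotomies but by this two-step iteration between the reduced equation on $\mathcal{M}$ and the elliptic estimate for $\hat\psi$.
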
 

\ 

\begin{remark}\label{r:energy} An upper bound on the energy is necessary 
in order to deduce the conclusions of Theorem \ref{t:main}. 
In Section \ref{s:other} we will construct critical points of $\Ea$ 
of degree one that have large energy and that are not rotations. 
\end{remark}

\

Our proof of Theorem \ref{t:main} goes as follows. After recalling some basic formulas for the M\"obius 
group in Section \ref{s:mob}, we prove in Section \ref{s:clos} that maps with low enough $\Ea$ energy 
must stay close in $W^{1,2}$ to some M\"obius map. We then improve this result in Section \ref{s:closw2p} 
for critical points of $\Ea$ (with low energy),  where we show closeness (after a conformal pull-back) 
to the identity in $W^{2,p}$, where $p>\frac43$ is chosen suitably. 

In Section \ref{s:lbound} we show that elements in the M\"obius group 
that are close to $u$ as in Theorem \ref{t:main} lie in a compact set depending on $\Ea(u_{\al})$. 
The techniques used in this section are similar to those used by Kazdan and Warner and 
also in the study of the semiclassical nonlinear Schr\"odinger equation; 
see for instance, Chapter 8.1 in \cite{amma1}. 
We proceed in section \ref{s:betterclosw2p} to further improve the $W^{2,p}$-closeness, 
and we finally prove our main theorem in Section \ref{s:pf}.

In Section \ref{s:other} we construct a rotationally symmetric $\al$-harmonic map of degree one 
with large energy which is not a rotation. As a byproduct we obtain 
the existence of $\al$-harmonic maps of degree one from the disk to $S^2$ 
which map the boundary circle to a point and 
we also obtain $\al$-harmonic maps of degree one 
which map an annulus to the sphere in such a way that 
the two boundary circles are mapped to antipodal points. 
Note that there are no such harmonic maps. 

\

\noindent {\bf Acknowledgements} T.L. wishes to thank the University of Warwick 
for having hosted him several times during the preparation of this work. 
A.M. has been supported by the PRIN project 
{\em Variational and perturbative aspects of nonlinear differential problems} 
and by the University of Warwick. 
M.M. acknowledges hospitality from the Max-Planck-Institute for Gravitational Physics in Golm 
and the University of Frankfurt.

\section{The Action of the M\"{o}bius Group}\label{s:mob}
Let $\varphi \colon S^2 \to S^2$ be a holomorphic map of degree 1. 
Given an arbitrary map $u \colon S^2 \to S^2$, 
we shall be interested in how $e(u \circ \varphi)$ and 
$\Ea(u \circ \varphi)$ depend on $\varphi$. 
For this, it is convenient to identify 
$S^2 \subset \R^3 $ with $\widehat{\C}=\C \cup \{\infty\}$ 
via the stereographic projection from the north pole. 
If we denote the domain $S^2 \subset \R^3 $ as 
$\{(x,y,z) \in \R^3 : x^2 + y^2 + z^2 = 1\}$ 
and the target $S^2 \subset \R^3 $ as 
$\{(u^1,u^2,u^3) \in \R^3 : (u^1)^2 + (u^2)^2 + (u^3)^2 = 1\}$, 
then the stereographic identifications with $\widehat{\C}$ 
are given by 
$$ 
  x + i y = \frac{2 \z}{1 + |\z|^2}, 
   \quad z = \frac{|\z|^2 - 1}{|\z|^2 + 1}; \qquad\quad 
  u^1 + i u^2 = \frac{2 \eta}{1 + |\eta|^2}, 
   \quad u^3 = \frac{|\eta|^2 - 1}{|\eta|^2 + 1}. 
$$
The inverse maps are 
$$ 
  \z = \frac{x+iy}{1-z}; \qquad\quad 
  \eta = \frac{u^1 + iu^2}{1-u^3}. 
$$ 
\subsection{The M\"{o}bius Group} 

The holomorphic maps of degree one from $\widehat{\C}$ to itself are 
the so-called fractional linear transformations which are of the form 
$$ 
  \z \mapsto \frac{a \z + b}{c \z + d}, \quad ad-bc = 1. 
$$ 
They form a group, called the M\"{o}bius group, 
which is the projective special linear group $PSL(2,\C)$. 
Given $M \in SL(2,\C)$, let $\l, \l^{-1}, \ \l > 0$, be 
the eigenvalues of $MM^*$. The singular value decomposition of matrices (see, e.g., \cite{strang93})
tells us that there exists $U,V \in SU(2)$ such that, 
\begin{equation} \label{eq:SVD} 
  M=UDV^*, \text{ where } D = \begin{pmatrix} \l^{1/2}&0 \\ 0&\l^{-(1/2)} \end{pmatrix} . 
\end{equation} 

Elements of the subgroup $SU(2)$ of $SL(2,\C)$ represent a rotation; 
indeed, if $I$ denotes the $2 \times 2$ identity matrix then, 
$SO(3)$ may be identified with $SU(2)/\{I, -I\}$, 
which establishes $SU(2)$ as the double cover of $SO(3)$. 
The diagonal matrices of the form 
$\begin{pmatrix} \l^{1/2}&0 \\ 0&\l^{-(1/2)} \end{pmatrix}$ 
represent the dilations $m_{\l}$ which are defined by 
$$ 
  m_\l(\z) := \l\z. 
$$ 

\subsection{Energy density in stereographic coordinates} 
A map $u \colon S^2 \to S^2$ shall also be denoted by 
$\eta \colon \widehat{\C} \to \widehat{\C}$. 
However, we shall still denote by $u$ the map to $S^2$ that arises from 
identifying the domain $S^2$ with $\widehat{\C}$. We have: 
\begin{itemize} 
\item the energy density of $u$, $e(u)$, is given by: 
$$ 
  e(u)(\z) = \frac{(1+|\z|^2)^2}{2(1+|\eta|^2)^2}\,|\nabla_0 \eta|^2 
$$ 
where $\nabla_0 \eta$ is the Euclidean gradient of $\eta$ as a map from 
$\C$ to $\C$ with the flat metrics on both domain and target. 
\item The area element $dA_{S^2}$ on the domain $S^2$ is given by: 
$$ 
  dA_{S^2} = \frac{4}{(1+|\z|^2)^2} \, dA_0
$$ 
where $dA_0 := \frac{\sqrt{-1}}{2} d\z \wedge d\bar{\z}$ 
is the Euclidean area element on $\C$. 
\end{itemize} 

\subsection{Transformation of energy density and $\al$-energy 
under composition by a M\"{o}bius transformation} 
Given $M \in SL(2,\C)$ and a map $u \colon \widehat{\C} \to S^2$, 
let $u_M$ be the map defined by 
$$ 
  u_M(\z) = u(M \z) \text{ where, if } 
  M = \begin{pmatrix} a&b \\ c&d \end{pmatrix} \text{ then, by 
  $M \z$ we mean }\frac{a \z+b}{c \z+d}. 
$$ 
We have 
\begin{equation} \label{eq:euM} 
 \begin{split} 
 e(u_M)(\z) &= 
  \frac{(1 + |\z|^2)^2}{2(1 + |\eta(M \z)|^2)^2} 
  \left| \frac{d}{d \z}\left(\frac{a \z+b}{c \z+d}
  \right) \right|^2 |\nabla_0 \eta|^2(M \z) \\ 
  &= 
 \frac{(1 + |\z|^2)^2}{|c \z + d|^4(1 + |M \z|^2)^2} \, 
   \big(e(u)(M \z)\big). 
 \end{split} 
\end{equation} 
Now 
\begin{equation} \label{eq:den} 
 \begin{split} 
 |c \z + d|^2(1 + |M \z|^2) &= |a \z + b|^2 + |c \z + d|^2 \\ 
  &= \left| \begin{pmatrix} a&b \\ c&d \end{pmatrix} 
   \begin{pmatrix}\z \\ 1 \end{pmatrix} \right|^2 \\ 
  &= \left| \begin{pmatrix} \l^{1/2}&0 \\ 0&\l^{-(1/2)} \end{pmatrix} 
   \begin{pmatrix}\z \\ 1 \end{pmatrix} \right|^2 
    \quad \text{(by \eqref{eq:SVD})} \\ 
  &= \frac{\l^2 |\z|^2 + 1}{\l}. 
 \end{split} 
\end{equation} 
Using \eqref{eq:den} in \eqref{eq:euM} gives 
\begin{equation} \label{euM_eu} 
e(u_M)(\z) = \frac{\l^2 (1 + |\z|^2)^2}{(1 + \l^2 |\z|^2)^2} \, 
   \big(e(u)(M \z)\big).
\end{equation} 

The transformation relation \eqref{euM_eu} allows us to 
restrict our attention to the dilations $m_{\l}$. Set 
$\ula = u \circ m_{\l}$, i.e., $\ula(\z) = u(\l \z)$ and set 
\begin{equation}\label{eq:chil}
   \chi_\l(\z) = \frac{(1+\l^2 |\z|^2)^2}{\l^2(1 + |\z|^2)^2}. 
\end{equation}
Then 
$$ 
  e(u)(\l \z) = \chi_\l(\z)  \big(e(\ula)(\z)\big) 
$$ 
for every $\l >0$ and therefore, 
\begin{align*} 
  \Ea(u)&=2^{\al - 1} \int_{\C}\big(1+e(u)(\z)\big)^{\al} 
    \frac{4}{(1 + |\z|^2)^2} \, dA_0(\z) \\ 
  &=2^{\al - 1} \int_{\C}\big(1+e(u)(\l\z)\big)^{\al} 
    \frac{4 \l^2}{(1 + |\l\z|^2)^2} \, dA_0(\z) \\ 
  &=2^{\al - 1} \int_{\C}\big(1+\chi_\l(\z)e(\ula)(\z)\big)^{\al} 
    \frac{4}{\chi_\l(\z)(1 + |\z|^2)^2} \, dA_0(\z), 
\end{align*} 
that is, 
\begin{equation} \label{Ea_eallam} 
\Ea(u) = E_{\al,\l}(\ula) = E_{\al, \l^{-1}} (u_{\l^{-1}})
\end{equation} 
where $E_{\al,\l}$ is the functional defined by 
\begin{equation} \label{def:Ealphlam} 
  E_{\al,\l}(v) = \frac{1}{2} \int_{S^2} \left( 2 + \chi_\l 
  |\nabla_{S^2} v|^2 \right)^{\al} \frac{1}{\chi_\lambda} dA_{S^2}. 
\end{equation} 
Clearly $u$ is a critical point of $\Ea$ if, and only if, 
$\ula$ is a critical point of $E_{\al,\l}$. 
Moreover, due to the above symmetry of $\Ea$ in $\l$, $\l^{-1}$, 
we assume throughout the rest of the paper that $\l \geqslant 1$.

\begin{proposition} \label{p:ELal} 
If $\chi_\l$ is as in \eqref{eq:chil}, 
the Euler Lagrange equation satisfied by a critical point $v$ 
of $E_{\al,\l}$ is 
$$
  \Delta v + |\nabla v|^2 v + f_1 + f_2 = 0, 
$$
where 
\begin{align}
  f_1&:=(\al - 1) \left( 
  \frac{\chi_\l \nabla (|\nabla v|^2) \cdot \nabla v}{2+\chi_\l |\nabla v|^2} 
   \right) \label{def.fi} \\ 
\noalign{and} 
  f_2&:=(\al - 1) \left( 
 \frac{\chi_\l |\nabla v|^2 \nabla \log \chi_\l \cdot \nabla v}{2+\chi_\l |\nabla v|^2} 
  \right). \label{def.fi1} 
\end{align} 
\end{proposition}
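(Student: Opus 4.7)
The proposition is a direct computation of the first variation of the constrained energy $E_{\al,\l}$, subject to the pointwise constraint $|v|^2 = 1$; I would carry it out as follows.

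First, I would compute the unconstrained first variation. Given a smooth variation $\phi$ of $v$, differentiating under the integral gives
$$\frac{d}{dt}\Big|_{t=0} E_{\al,\l}(v+t\phi) = \int_{S^2} \al \, (2+\chi_\l|\nabla v|^2)^{\al-1} \nabla v \cdot \nabla \phi \, dA_{S^2},$$
where the prefactor $\chi_\l^{-1}$ outside the integrand cancels against the $\chi_\l$ brought down when the outer power is differentiated through its argument. Integrating by parts on $S^2$ yields the weak form
$$-\int_{S^2} \text{div}\bigl[(2+\chi_\l|\nabla v|^2)^{\al-1}\nabla v\bigr] \cdot \phi \, dA_{S^2} = 0.$$

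To enforce the sphere-valued constraint, I would restrict to variations $\phi$ that are tangent to $v$, that is $\phi \cdot v \equiv 0$. The weak form then says that the divergence in the previous display is pointwise parallel to $v$. Taking the inner product with $v$ and using the identity $v \cdot \Delta v = -|\nabla v|^2$ (obtained from differentiating $|v|^2 \equiv 1$ twice) identifies the Lagrange multiplier and gives
$$\text{div}\bigl[(2+\chi_\l|\nabla v|^2)^{\al-1}\nabla v\bigr] = -(2+\chi_\l|\nabla v|^2)^{\al-1} |\nabla v|^2 \, v.$$

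Finally, I would expand the divergence by the product rule, using $\nabla(\chi_\l|\nabla v|^2) = \chi_\l \nabla|\nabla v|^2 + |\nabla v|^2 \nabla \chi_\l$, and then divide through by the strictly positive factor $(2+\chi_\l|\nabla v|^2)^{\al-1}$. The resulting PDE has the form stated in the proposition, with $f_1$ coming from the $\chi_\l \nabla|\nabla v|^2 \cdot \nabla v$ contribution. To match the form of $f_2$ as written, I would simply rewrite $\nabla \chi_\l = \chi_\l \nabla \log \chi_\l$, which is valid since $\chi_\l > 0$. The whole argument is a routine variational calculation; the only bookkeeping subtlety is tracking the cancellation of $\chi_\l$ and $\chi_\l^{-1}$ in the first variation, and there is no substantive obstacle.
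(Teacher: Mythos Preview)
Your proposal is correct and is exactly the straightforward computation the paper has in mind; the paper itself gives no details beyond stating that the proof is a routine calculation, and your outline (compute the first variation, note the cancellation of $\chi_\l$ against $\chi_\l^{-1}$, integrate by parts, identify the Lagrange multiplier via $v\cdot\Delta v=-|\nabla v|^2$ and $v\cdot\nabla v=0$, then expand the divergence and rewrite $\nabla\chi_\l=\chi_\l\nabla\log\chi_\l$) reproduces the stated equation verbatim.
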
 
The proof of this proposition is just a straightforward computation. 

\section{Closeness to the M\"obius group} \label{s:clos}

\noindent The aim of this section is to prove the following proposition. 

\begin{proposition}\label{p:clomob} 
There exists $\delta^* > 0$ such that, 
for any $\delta \in (0, \delta^*)$ there exists $\ep > 0$ such that, 
if $1 \leqslant \al \leqslant 2$ and if $\Ea(u) \leqslant 2^{2\al+1} \pi + \ep$, 
where $u$ is of degree $1$, then there exists $M \in PSL(2,\C)$ such that 
\begin{align}
   \left\|  \nabla (u_M - Id) \right\|_{L^2(S^2)} \leqslant \delta. \label{eq:delta-close}
\end{align}
Furthermore, there is a fixed constant $C$ such that, 
if $\l \geqslant 1$ is the largest eigenvalue of $MM^*$ 
(see \eqref{eq:SVD}) then 
\begin{equation} \label{eq:alphalambd} 
(\al - 1) (\log \l) \min\{\log \l, 1\} \leqslant C \delta. 
\end{equation} 
\end{proposition}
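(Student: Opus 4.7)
The plan is to combine the chain of inequalities in \eqref{est1} with a concentration-compactness argument for almost-holomorphic maps, and then extract \eqref{eq:alphalambd} from the behavior of $E_{\al,\l}(Id)$ as a function of $\l$.

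Step 1 (near-minimality of $E(u)$). Applying Jensen's inequality to the convex function $t\mapsto(1+t)^\al$ on the probability measure $dA_{S^2}/(4\pi)$ yields
\begin{equation*}
\Ea(u)\;\geq\;2^{\al-1}(4\pi)^{1-\al}\bigl(4\pi+E(u)\bigr)^\al.
\end{equation*}
Since $\deg u=1$ forces $E(u)\geq 4\pi$ via $\int_{S^2} J(u)\,dA_{S^2}=4\pi$ and $J(u)\leq e(u)$, the hypothesis $\Ea(u)\leq 2^{2\al+1}\pi+\ep$ gives $E(u)-4\pi\leq C\ep$. Hence the ``$\bar\partial$-defect'' $\int_{S^2}(e(u)-J(u))\,dA_{S^2}=E(u)-4\pi$ is $\leq C\ep$, i.e.\ $u$ is almost holomorphic in stereographic coordinates.

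Step 2 (proof of \eqref{eq:delta-close} by contradiction). If no such $M$ exists for some $\delta>0$, there is a sequence $(u_n,\al_n)$ with $\al_n\in[1,2]$, $\deg u_n=1$, $\Ea_{\al_n}(u_n)-2^{2\al_n+1}\pi\to 0$, yet $\inf_{M\in PSL(2,\C)}\|\nabla(u_n\circ M-Id)\|_{L^2}\geq\delta$. By Step 1, $E(u_n)\to 4\pi$ and $\bar\partial u_n\to 0$ in $L^2$. A concentration-compactness / bubble-decomposition argument (much simpler here than in Sacks-Uhlenbeck, since the energy sits at the minimum $4\pi|\deg|$ and no energy loss can occur at concentration points) yields Möbius transformations $M_n$ such that $u_n\circ M_n\rightharpoonup v_\infty$ weakly in $W^{1,2}$, with $v_\infty:S^2\to S^2$ holomorphic of degree 1; such a $v_\infty$ lies in $PSL(2,\C)$, and after replacing $M_n$ by $M_n\circ v_\infty^{-1}$ we may take $v_\infty=Id$. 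The equality $E(u_n\circ M_n)=E(u_n)\to 4\pi=E(Id)$ upgrades weak convergence to strong $W^{1,2}$ convergence, contradicting the lower bound.

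Step 3 (proof of \eqref{eq:alphalambd}). Using the SVD \eqref{eq:SVD} and the invariance of $\Ea$ under $SO(3)$ acting on the domain, reduce to $M=m_\l$ with $\l\geq 1$; then \eqref{Ea_eallam} gives $\Ea(u)=E_{\al,\l}(u_M)$. Writing $u_M=Id+w$ with $\|\nabla w\|_{L^2}\leq\delta$, convexity of $v\mapsto(2+\chi_\l|\nabla v|^2)^\al$ in $\nabla v$ and Cauchy-Schwarz on its first variation at $Id$ give
\begin{equation*}
E_{\al,\l}(u_M)\;\geq\;E_{\al,\l}(Id)\;-\;C\Bigl(\int_{S^2}(1+\chi_\l)^{2(\al-1)}\,dA_{S^2}\Bigr)^{1/2}\delta,
\end{equation*}
and the bracketed factor stays $O(1)$ as long as $(\al-1)\log\l=O(1)$. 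Combined with the hypothesis, this yields $E_{\al,\l}(Id)-2^{2\al+1}\pi\leq\ep+C\delta\leq C'\delta$. On the other hand, a direct stereographic computation -- using $\int_{S^2}\chi_\l^{-1}\,dA_{S^2}=4\pi$ (check via $\z\mapsto\l\z$) and expanding the Jensen defect of $(1+\chi_\l)^\al$ relative to its weighted mean -- shows
\begin{equation*}
E_{\al,\l}(Id)-2^{2\al+1}\pi\;\geq\;c(\al-1)(\log\l)\min\{\log\l,1\},
\end{equation*}
with the $(\log\l)^2$ regime coming from the Taylor expansion near $\l=1$ and the linear-in-$\log\l$ tail from $\l\to\infty$, each carrying the factor $\al-1$ from the second derivative of $t\mapsto(1+t)^\al$. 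Combining the two bounds gives \eqref{eq:alphalambd}. The main obstacle is Step 2: one must ensure that the almost-holomorphic sequence $u_n$ really resolves, after a single Möbius rescaling, into $Id$ rather than splitting off multiple bubbles or developing a nontrivial background. This is enforced precisely by the minimum-energy hypothesis together with the fact that any nontrivial holomorphic bubble carries energy at least $4\pi$.
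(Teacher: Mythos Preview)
Your outline tracks the paper's proof closely: Step~1 matches the opening of Lemma~\ref{l:closeMob}, Step~2 is the same contradiction argument (the paper invokes Duzaar--Kuwert rather than sketching the bubble decomposition), and Step~3 parallels the combination of Lemmas~\ref{l:Eaclose} and~\ref{l:eulambda}. Your convexity-based comparison in Step~3 is a pleasant variant of the paper's mean-value-theorem argument in Lemma~\ref{l:Eaclose} and produces an error term of the same order, namely $C(1+\l^2)^{\al-1}\delta$.

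There is, however, a genuine gap in Step~3. You assert that ``the bracketed factor stays $O(1)$ as long as $(\al-1)\log\l=O(1)$'', but a bound on $(\al-1)\log\l$ is exactly what you are trying to establish, so as written the argument is circular. Your claimed lower bound
\[
E_{\al,\l}(Id)-2^{2\al+1}\pi\;\geq\;c(\al-1)(\log\l)\min\{\log\l,1\}
\]
grows only linearly in $(\al-1)\log\l$ for $\log\l\geq 1$, whereas the error term $C\l^{2(\al-1)}\delta=Ce^{2(\al-1)\log\l}\delta$ grows exponentially in that quantity; combining the two therefore does \emph{not} force $(\al-1)\log\l$ to be bounded. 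The paper closes this loop by proving a third regime in Lemma~\ref{l:eulambda}: when $(\al-1)\log\l\geq 2$ one has $E_{\al,\l}(Id)-2^{2\al+1}\pi\geq C\l^{2(\al-1)}$, which matches the error term and yields an outright contradiction for $\delta$ below a fixed threshold $\delta^*$. Only after this bootstrap has eliminated the large-$\l$ regime does your linear/quadratic lower bound deliver \eqref{eq:alphalambd}. You need to supply this exponential lower bound (or some equivalent device that controls the error term without assuming the conclusion) to make Step~3 complete.
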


\noindent The proof of the above proposition relies on 
the three lemmas below. 

\begin{lemma} \label{l:closeMob}
Given $\delta > 0$, there exists $\ep > 0$, sufficiently small, 
with the following property: 
for all $\al \geqslant 1$, if $u \in W^{1,2\al} (S^2, S^2)$ is of degree 1 
and $\Ea(u) \leqslant 2^{2\al+1} \pi + \ep$, 
there exists $M \in PSL(2,\C)$ such that 
\begin{equation} \label{L2close} 
   \left\| \nabla (u_M - Id)  \right\|_{L^2(S^2)} \leqslant \delta. 
\end{equation} 
\end{lemma}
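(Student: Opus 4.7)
The plan is to argue by contradiction. Suppose there exist $\delta_0 > 0$, $\al_n \geq 1$, $\ep_n \to 0$, and degree-1 maps $u_n \in W^{1,2\al_n}(S^2,S^2)$ with $E_{\al_n}(u_n) \leq 2^{2\al_n+1}\pi + \ep_n$, yet $\|\nabla(u_{n,M} - Id)\|_{L^2(S^2)} > \delta_0$ for every $M \in PSL(2,\C)$. First I would substitute the energy hypothesis into the chain \eqref{est1}: a direct computation gives
$$\bigl(2^{1-\al_n}\cdot 2^{2\al_n+1}\pi\bigr)^{1/\al_n}(4\pi)^{(\al_n-1)/\al_n} = 8\pi,$$
and this evaluation holds uniformly in $\al_n \geq 1$, so $\int_{S^2}(1 + e(u_n))\, dA_{S^2} \to 8\pi$, and hence $E(u_n) \to 4\pi$. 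Since $\int_{S^2} J(u_n)\, dA_{S^2} = 4\pi$ and $e(u_n) \geq J(u_n)$ pointwise, one also obtains $\int_{S^2}(e(u_n) - J(u_n))\, dA_{S^2} \to 0$; in any conformal chart this integrand equals $2|\bar\partial u_n|^2$, so the sequence is asymptotically holomorphic in $L^2$.

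Because $E$, the degree, and $\int|\bar\partial u|^2$ are all conformally invariant on $S^2$, I would exploit the $PSL(2,\C)$-action to select $M_n$ so that $v_n := u_n \circ M_n^{-1}$ has no Dirichlet energy concentrating at a single point. Concretely, a bubble-tree analysis adapted to sequences with vanishing $\bar\partial$-energy shows that, after one Möbius rescaling targeted at the point and scale of maximal concentration of $|\nabla u_n|^2\, dA_{S^2}$, the rescaled sequence is concentration-free: any surviving concentration would produce, in the limit, a non-constant holomorphic map from $S^2$ (extending the rescaled limit on $\R^2$ across infinity by removable singularities) carrying at least $4\pi$ of Dirichlet energy, which is incompatible with the coexistence of a non-trivial weak limit once the total energy is only $4\pi + o(1)$.

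With $v_n$ so balanced, a subsequence converges weakly in $W^{1,2}(S^2,\R^3)$ and pointwise a.e., hence $v_\infty \in W^{1,2}(S^2,S^2)$. The asymptotic holomorphicity passes to the weak limit, giving $\bar\partial v_\infty = 0$; equi-integrability of $|\nabla v_n|^2$ together with weak continuity of Jacobians yields $\deg(v_\infty) = 1$; by Theorem \ref{harm:rat}, $v_\infty$ is a Möbius transformation $N$, and $E(v_\infty) = 4\pi$. Because $E(v_n) \to E(v_\infty)$, the weak $W^{1,2}$-convergence upgrades to strong. Setting $M := M_n^{-1} N^{-1}$ then gives $u_{n,M} = v_n \circ N^{-1} \to Id$ in $W^{1,2}$, contradicting the standing assumption.

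The main obstacle is the concentration-removal step: carrying out the bubble-tree analysis in the absence of a genuine Euler--Lagrange equation for $u_n$, and verifying that a single Möbius rescaling eliminates concentration without spawning a new one. The key quantitative input is the $4\pi$ energy quantization for non-constant holomorphic maps between two-spheres, together with the tight energy budget $4\pi + o(1)$, which leaves no room for both a non-trivial weak limit and a non-trivial bubble. Once this is carried out, the remainder is a routine weak-to-strong compactness argument.
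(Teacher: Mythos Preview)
Your outline is correct, but it takes a substantially longer route than the paper. After the same contradiction setup and the same reduction to $E_1(u_n)\leqslant 8\pi+\ep_n$, the paper simply invokes Theorem~1 of Duzaar--Kuwert \cite{duzkuw}: a minimising sequence for the Dirichlet energy among degree-one maps converges strongly in $W^{1,2}$, after composition with suitable M\"obius maps $M_n$, to a degree-one minimiser $u_\infty$. By Theorem~\ref{harm:rat} this limit is itself a M\"obius map $M_\infty$, and conformal invariance gives $(u_n)_{M_nM_\infty^{-1}}\to Id$ strongly, yielding the contradiction in two lines.

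What you are doing---observing $\int|\bar\partial u_n|^2\to 0$, rescaling at the point of maximal concentration, and using the $4\pi$ quantisation for holomorphic spheres to exclude a second bubble---is essentially a hands-on reconstruction of the Duzaar--Kuwert argument in this special case. Your energy-budget reasoning is sound (a nontrivial bubble costs $\geqslant 4\pi$, so with total energy $4\pi+o(1)$ there is no room for both a nontrivial weak limit and a bubble), and the step you flag as the main obstacle is indeed the substantive one. The payoff of your route is self-containment; the paper's route trades that for brevity by citing the off-the-shelf concentration-compactness result. One small notational point: your choice $M=M_n^{-1}N^{-1}$ depends on $n$, which is fine since the lemma only asks for one M\"obius map per $u_n$; the paper writes this as $M_nM_\infty^{-1}$ in its notation $u_M(\zeta)=u(M\zeta)$.
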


\begin{proof} If $\Ea(u) \leqslant 2^{2\al+1} \pi + \ep$ then by \eqref{est1} 
we have 
\begin{align*} 
E_1(u) &= \int_{S^2} (1+e(u)) \, dA_{S^2} \\ 
 &\leqslant \left(\frac{2^{1-\al}\Ea(u)}{4 \pi}\right)^{\frac{1}{\al}} 4 \pi \\ 
 &\leqslant \left(1+\frac{\ep}{2^{2 \al + 1} \pi}\right)^{\frac{1}{\al}} 
   8 \pi \\ 
 &\leqslant 8 \pi + \ep. 
\end{align*} 
If, for a contradiction, the lemma were not true, 
we could find a sequence $\ep_n \downarrow 0$, 
a sequence $u_n\in W^{1,2}(S^2,S^2)$ of degree one, with $E_1(u_n) \leqslant 8 \pi + \ep_n$ 
and $\delta > 0$ such that 
\begin{equation} \label{eq:delta}
  \left\| \nabla \big((u_n)_M - Id \big)  \right\|_{L^2(S^2)} > \delta 
   \quad \hbox{ for all } M \in PSL(2,\C). 
\end{equation}
But $u_n$ would then be a minimising sequence for $E_1$ of degree one
and therefore, by Theorem 1 in \cite{duzkuw}, 
there exists $M_n \in PSL(2,\C)$ such that $(u_n)_{M_n}$ 
converges strongly in Dirichlet norm 
to a degree one minimiser $u_{\infty}$ of $E_1$. 
(We remark that, by energetic reasons, 
multiple splitting into maps of different degrees is excluded.) 
By Theorem \ref{harm:rat}, $u_{\infty}$ is of the form 
$\z \mapsto M_{\infty} \z$ for some $M_{\infty} \in PSL(2,\C)$. 
By the conformal invariance of the Dirichlet integral we have that 
$$
 \left\| \nabla 
   \big((u_n)_{M_nM_{\infty}^{-1}} - Id \big) 
 \right\|_{L^2(S^2)} 
\to 0. 
$$
This then contradicts \eqref{eq:delta} and concludes the proof. 
\end{proof}

We still need to establish a bound on 
the largest eigenvalue $\l$ of $MM^*$ in the previous lemma. 
The rough plan for doing this is that, because of 
the closeness in Dirichlet norm provided by \eqref{L2close}, 
$E_{\al,\l}(u_M)$ should be close to $E_{\al,\l}(Id)$. 
We should then be able to explicitely describe how 
$E_{\al,\l}(Id)$ grows with $\l$. Recall that the relation between 
$\Ea$ and $E_{\al,\l}$ is given by \eqref{def:Ealphlam}.
This plan is executed in the next two lemmas.   

\begin{lemma} \label{l:Eaclose} 
If $\lambda \geqslant 1$ and $1 \leqslant \al \leqslant 2$, 
we have
\begin{equation} \label{eq:Eaclose} 
  E_{\al,\l}(v) - E_{\al,\l}(Id) \geqslant 
   - \al 2^{\al - 2} (1 + \l^2)^{\al - 1} 
    \| \, | \nabla_{S^2} v |^2 - 2 \,  \|_{L^1(S^2)}. 
\end{equation} 
\end{lemma}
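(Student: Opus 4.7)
The plan is to compare the two quantities pointwise through convexity of the integrand in $|\nabla v|^2$, and then bound the resulting linear term using the $L^\infty$ bound on $\chi_\l$.

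First I would note that for the identity map $Id \colon S^2 \to S^2$ one has $|\nabla_{S^2} Id|^2 \equiv 2$, so the integrand defining $E_{\al,\l}(Id)$ is $\frac{1}{2\chi_\l}(2+2\chi_\l)^\al$. For fixed $\chi_\l > 0$ and $\al \geqslant 1$, the map
\[
\phi(t) := (2 + \chi_\l t)^\al
\]
is convex in $t$, with $\phi'(t) = \al \chi_\l (2 + \chi_\l t)^{\al - 1}$. The tangent-line inequality $\phi(t) \geqslant \phi(2) + \phi'(2)(t-2)$ applied at $t = |\nabla_{S^2} v|^2$ gives
\[
(2 + \chi_\l |\nabla v|^2)^\al \geqslant (2 + 2\chi_\l)^\al + \al \chi_\l (2 + 2\chi_\l)^{\al - 1} \bigl(|\nabla v|^2 - 2\bigr).
\]
Dividing by $2\chi_\l$, integrating over $S^2$ and using the definition \eqref{def:Ealphlam} of $E_{\al,\l}$ yields
\[
E_{\al,\l}(v) - E_{\al,\l}(Id) \geqslant \al\, 2^{\al-2} \int_{S^2} (1+\chi_\l)^{\al-1}\bigl(|\nabla v|^2 - 2\bigr) dA_{S^2}.
\]

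Next I would establish the $L^\infty$ bound $\chi_\l \leqslant \l^2$ when $\l \geqslant 1$. A direct computation of $\frac{d}{dr}\chi_\l$ with $r = |\z|^2$ shows that $\chi_\l$ is monotone increasing in $r$ for $\l \geqslant 1$, with limit $\l^2$ as $r \to \infty$; thus $(1+\chi_\l)^{\al-1} \leqslant (1+\l^2)^{\al-1}$ (this uses $\al \geqslant 1$).

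Finally, combining the two steps with the trivial estimate $\int f g\, dA \geqslant -\|f\|_\infty \|g\|_{L^1}$ gives
\[
\int_{S^2} (1+\chi_\l)^{\al-1}\bigl(|\nabla v|^2 - 2\bigr) dA_{S^2} \geqslant -(1+\l^2)^{\al-1}\bigl\| |\nabla v|^2 - 2\bigr\|_{L^1(S^2)},
\]
which inserted above produces exactly \eqref{eq:Eaclose}.

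There is really no hard step here; the only small point to check is the monotonicity of $\chi_\l$ giving $\|\chi_\l\|_\infty = \l^2$ for $\l \geqslant 1$, which is essential for the factor $(1+\l^2)^{\al-1}$ on the right-hand side. Note that degree or conformality hypotheses on $v$ are not used in this lemma; they will enter only when this estimate is combined with the energy upper bound and the $W^{1,2}$-closeness of Lemma \ref{l:closeMob} to control $\l$ via \eqref{eq:alphalambd}.
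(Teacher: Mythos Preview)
Your proof is correct and arrives at exactly the same intermediate inequality as the paper, namely
\[
E_{\al,\l}(v) - E_{\al,\l}(Id) \geqslant \al\, 2^{\al-2} \int_{S^2} (1+\chi_\l)^{\al-1}\bigl(|\nabla v|^2 - 2\bigr)\, dA_{S^2},
\]
after which both arguments finish identically via $\sup_{S^2}\chi_\l = \l^2$. The only difference is in how this inequality is reached: the paper applies the mean value theorem to write $(2+\chi_\l|\nabla v|^2)^\al - (2+2\chi_\l)^\al = \al\chi_\l(2+\chi_\l g)^{\al-1}(|\nabla v|^2-2)$ for some intermediate $g$, and then splits $S^2$ into the sets $\{|\nabla v|^2 \geqslant 2\}$ and $\{|\nabla v|^2 < 2\}$ in order to compare $(2+\chi_\l g)^{\al-1}$ with $(2+2\chi_\l)^{\al-1}$ according to the sign of $|\nabla v|^2-2$. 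Your convexity/tangent-line argument packages this case analysis in a single step and is the cleaner route; the paper's version is really the same idea, since the comparison of $(2+\chi_\l g)^{\al-1}$ with $(2+2\chi_\l)^{\al-1}$ uses precisely the monotonicity of $\phi'$, which is equivalent to the convexity you invoke.
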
 

\begin{proof} 
By the mean value theorem, there is a positive function 
$g : S^2 \to \R_+$ whose value at $p$ lies between 
$|\nabla_{S^2} v(p)|^2$ and 2 = $|\nabla_{S^2} Id|^2$ such that 
\begin{equation} \label{eq:mvt} 
  E_{\al,\l}(v) - E_{\al,\l}(Id) = 
  \frac{\al}{2} \int_{S^2} \left( 2 + \chi_\l g
   \right)^{\al-1} (|\nabla_{S^2} v|^2 - 2) \, dA_{S^2}. 
\end{equation} 
Let 
$$ 
  A_+ := \{p \in S^2 : |\nabla_{S^2} v(p)|^2 \geqslant 2\} 
  \quad\text{and}\quad 
  A_- := \{p \in S^2 : |\nabla_{S^2} v(p)|^2 < 2\}. 
$$ 
Then, on $A_+$ $g \geqslant 2$ and on $A_-$ $g \leqslant 2$. Therefore, 
$$ 
  \int_{A_+} \left( 2 + \chi_\l g
   \right)^{\al-1} (|\nabla_{S^2} v|^2 - 2) \, dA_{S^2} 
  \geqslant  2^{\al-1} \int_{A_+} \left( 1 + \chi_\l \right)^{\al-1} 
   (|\nabla_{S^2} v|^2 - 2) \, dA_{S^2} 
$$ 
and, since $(|\nabla_{S^2} v|^2 - 2)$ is negative on $A_{-}$, 
$$ 
  \int_{A_-} \left( 2 + \chi_\l g
   \right)^{\al-1} (|\nabla_{S^2} v|^2 - 2) \, dA_{S^2} 
  \geqslant  2^{\al-1} \int_{A_-} \left( 1 + \chi_\l \right)^{\al-1} 
   (|\nabla_{S^2} v|^2 - 2) \, dA_{S^2}. 
$$ 
It follows that 
\begin{equation} \label{eq:mvt_est} 
  \int_{S^2} \left( 2 + \chi_{\l} g \right)^{\al-1} 
   (|\nabla_{S^2} v|^2 - 2) \, dA_{S^2} \geqslant 
   2^{\al-1} \int_{S^2} \left( 1 + \chi_\l \right)^{\al-1} 
   (|\nabla_{S^2} v|^2 - 2) \, dA_{S^2}. 
\end{equation} 
Now $\sup\limits_{S^2}\chi_{\l} = \l^2$ and therefore, 
\begin{equation} \label{eq:mvt_bd} 
 \left|\int_{S^2} \left( 1 + \chi_\l \right)^{\al-1} 
   (|\nabla_{S^2} v|^2 - 2) \, dA_{S^2}\right| 
  \leqslant (1 + \l^2)^{\al - 1} 
    \| \, | \nabla_{S^2} v |^2 - 2 \, \|_{L^1(S^2)}. 
\end{equation} 

Estimate \eqref{eq:Eaclose} is established by putting together 
\eqref{eq:mvt}, \eqref{eq:mvt_est} and \eqref{eq:mvt_bd}. 
\end{proof} 

The next lemma describes how $E_{\al,\l}(Id)$ grows with $\l$. 
\begin{lemma}\label{l:eulambda} 
We have that 
\begin{align}
E_{\al,\l}(Id) = \Ea(m_{\l^{-1}}) = \Ea(m_{\l}). \label{eq:relenergy}
\end{align}
Moreover, by letting 
\begin{equation} \label{def:xi} 
\xi(\al, \l) := \Ea(m_{\l}) - 2^{2 \al + 1} \pi,
\end{equation} 
there exists a fixed constant $C$ such that, 
for $1 < \al \leqslant 2$, 
\begin{equation} \label{eq:lowerbdea}
 \xi(\al,\l) \geqslant 
  \begin{cases} 
    C \l^{2 \al - 2}, & \text{if }(\al - 1)\log \l \geqslant 2, \\ 
    C (\al - 1) \log \l, & \text{if } 
      (\al - 1) \leqslant (\al - 1)\log \l \leqslant 2 \\ 
    C (\al - 1)(\log \l)^2, & \text{if }0 \leqslant \log \l \leqslant 1. 
  \end{cases} 
\end{equation}
Additionally, $\Ea(m_{\l})$ is increasing in $\l$ and 
we have for $0 \leqslant (\al-1) \log \l  \leqslant 2$ that
\begin{equation}\label{growthenergy}
\frac{\partial}{\partial \log \lambda} \Ea(m_\l) = 
\frac{\partial}{\partial \log \l} E_{\al,\l} (Id) \geqslant 
C (\al - 1) \frac{|\log \lambda|}{1+ |\log \lambda|}.  
\end{equation}
\end{lemma}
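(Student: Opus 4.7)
The plan is to first dispatch the equality \eqref{eq:relenergy}, then reduce $\Ea(m_\l)$ to a one-dimensional integral that can be differentiated explicitly in $\tau := \log\l$, yielding both \eqref{growthenergy} and the bulk of \eqref{eq:lowerbdea}. The first equality in \eqref{eq:relenergy}, namely $E_{\al,\l}(Id) = \Ea(m_{\l^{-1}})$, is immediate from \eqref{Ea_eallam} applied to $u = m_{\l^{-1}}$, since $m_{\l^{-1}}\circ m_\l = Id$. The second, $\Ea(m_{\l^{-1}}) = \Ea(m_\l)$, follows from $SO(3)$-invariance of $\Ea$ together with the factorization $m_{\l^{-1}} = r\circ m_\l \circ r^{-1}$, where $r(\z) = 1/\z$ is the rotation of $S^2$ interchanging the two poles.

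For the quantitative estimates I would set $\mu := (\l^2-1)/(\l^2+1) = \tanh\tau$ and $a := 1-\mu^2$, so that in the axially symmetric coordinate $c = \cos\phi$ one has $\chi_\l = (1+\mu c)^2/a$. The substitution $p = 1+\mu c$ converts $F(\tau) := \Ea(m_{e^\tau})$ into the one-variable integral
\[
F(\tau) = \frac{2^\al \pi}{\mu}\int_{1-\mu}^{1+\mu}\frac{(p^2+a)^\al}{p^{2\al}}\, dp.
\]
A crucial algebraic simplification is the endpoint identity $(1\pm\mu)^2 + a = 2(1\pm\mu)$, which makes the boundary terms clean after differentiation. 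Using $d\mu/d\tau = a$ and collecting terms, one obtains an explicit expression for $F'(\tau)$ that vanishes identically at $\al=1$ (consistent with $F\equiv 8\pi$ there), so one can factor $(\al-1)$ out of the expression and aim to bound the remainder from below.

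The main obstacle is establishing the lower bound $F'(\tau) \geqslant c\,(\al-1)\,\tau/(1+\tau)$ uniformly for $0 \leqslant (\al-1)\tau \leqslant 2$: the natural integrand for $F'(\tau)$ involves products of factors such as $(p-a)$ and $(\al-1)p^2-a$ which change sign over $[1-\mu, 1+\mu]$, so positivity and the precise $\tau/(1+\tau)$ rate must be extracted by splitting the range and comparing against the $\al=1$ case where the total vanishes exactly. Once \eqref{growthenergy} is in hand, cases 2 and 3 of \eqref{eq:lowerbdea} follow by integration:
\[
\xi(\al,\l) \geqslant C(\al-1)\int_0^{\log\l}\frac{\sigma}{1+\sigma}\, d\sigma = C(\al-1)\bigl[\log\l - \log(1+\log\l)\bigr],
\]
which is comparable to $(\al-1)(\log\l)^2$ when $\log\l \leqslant 1$ and to $(\al-1)\log\l$ when $\log\l \geqslant 1$.

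Case 1 of \eqref{eq:lowerbdea}, $(\al-1)\log\l \geqslant 2$, I would handle directly rather than by integration. The pointwise formula $e(m_\l)(c) = a/(1+\mu c)^2$ shows that the energy density is of order $\l^2$ on a cap of area of order $\l^{-2}$ near the south pole, so restricting the integral defining $\Ea(m_\l)$ to this cap produces a contribution of order $\l^{2\al-2}$. For $(\al-1)\log\l \geqslant 2$ one has $\l^{2\al-2} \geqslant e^4$, which is large enough to dominate the ambient constant $2^{2\al+1}\pi$ and yield $\xi(\al,\l) \geqslant C\l^{2\al-2}$. Monotonicity of $F$ on $\{\l \geqslant 1\}$ is then a byproduct of the positivity $F'(\tau) > 0$ obtained in the derivative analysis.
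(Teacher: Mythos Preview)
Your reduction to a one-variable integral is correct and your treatment of \eqref{eq:relenergy} is fine. But there is a genuine gap: the derivative lower bound \eqref{growthenergy} is the heart of the lemma, and you only identify it as ``the main obstacle'' without proving it. Saying that the rate ``must be extracted by splitting the range and comparing against the $\al=1$ case'' describes what is left to do, not how to do it. In your $(p,\mu,a)$-parametrization the expression for $F'(\tau)$ does have sign-changing factors, and it is not at all clear that a naive splitting yields the uniform bound you need.

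The paper bypasses this difficulty through a different substitution. Writing $\l = e^\tau$ and changing to $w = e^t$ (related to your variable by $p = \sqrt{a}/w$), one obtains
\[
\Ea(m_{e^\tau}) = \frac{2^{2\al+1}\pi}{\sinh\tau}\int_0^\tau (\cosh t)^{\al}\cosh\bigl((\al-1)t\bigr)\, dt =: 2^{2\al+1}\pi\, G(\sigma), \qquad \sigma := (\al-1)\tau.
\]
Differentiating the quotient and then integrating by parts once gives
\[
G'(\sigma) = \frac{\cosh(\sigma/\beta)}{\beta\sinh^2(\sigma/\beta)}\int_0^\sigma \sinh(s/\beta)\,\bigl(\cosh(s/\beta)\bigr)^{\beta-1}\sinh(\al s/\beta)\, ds, \qquad \beta := \al - 1,
\]
whose integrand is \emph{manifestly positive}. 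Monotonicity is then immediate, and the quantitative lower bounds in the two regimes $\sigma \leqslant \beta$ and $\beta \leqslant \sigma \leqslant 2$ follow from elementary inequalities on $\sinh$ and $\cosh$. This step --- the hyperbolic-function form together with the integration by parts that produces a positive integrand --- is exactly the idea missing from your outline.

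Once \eqref{growthenergy} is in hand, your deduction of cases~2 and~3 of \eqref{eq:lowerbdea} by integration and of case~1 by a direct cap estimate is sound and matches the paper's strategy. (A minor point: the numerical claim that $\l^{2\al-2} \geqslant e^4$ already ``dominates'' $2^{2\al+1}\pi$ is not quite right, since $e^4 < 32\pi$; you need to track the constant in front of $\l^{2\al-2}$ a bit more carefully, as the paper does.)
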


\begin{proof} 
We start by obtaining an explicit formula for $\Ea(m_{\l})$: 
set $r := |\z|$ and then, as we saw in \S 2, 
$$ 
  e(m_{\l})(\z) =  \l^2 \frac{(1+r^2)^2}{(1 + \l^2 r^2)^2} 
  = \frac{1}{\chi_{\l}(\z)}. 
$$ 
So, 
$$ 
  \Ea(m_{\l}) = 2^{\al - 1} 8 \pi \int_0^{\infty} \left( 
   1 + \frac{\l^2 (1+r^2)^2}{(1 + \l ^2 r^2)^2} 
  \right)^{\al} \frac{r}{(1+r^2)^2} \, dr . 
$$ 
We make the change of variable 
\[ 
w := \l \frac{1+r^2}{1 + \l^2 r^2} 
\] 
for which 
\[ 
dw = 
2 \l r \frac{1-\l^2}{(1 + \l^2 r^2)^2}  dr
\] 
and obtain 
\[ 
\Ea(m_{\l}) = 2^{\al + 1} \pi \frac{\l}{\l^2 - 1} 
\int_{1/\l}^{\l} (1 + w^2)^{\al} w^{-2} \, dw. 
\] 
Setting $\l := e^{\tau}$ and $w := e^t$ yields: 
\begin{align}\label{eq:Ea_mtau} 
\Ea(m_{e^{\tau}}) &= 2^{\al + 1} \pi \frac{e^{\tau}}{e^{2 \tau} - 1} 
  \int_{-\tau}^{\tau} (1 + e^{2t})^{\al} e^{-t} \, dt 
  \nonumber \\[3\jot] 
&= \frac{2^{\al} \pi}{\sinh \tau} 
  \int_{-\tau}^{\tau} (e^{-t} + e^t)^{\al} e^{(\al - 1)t} \, dt 
  \nonumber\\[3\jot] 
&= \frac{2^{2 \al + 1} \pi}{\sinh \tau} 
\int_0^{\tau} (\cosh t)^{\al} \cosh((\al - 1)t) \, dt 
\end{align} 
where we have used 
\[ 
\int_{-\tau}^0 (e^{-t} + e^t)^{\al} e^{(\al - 1)t} \, dt 
= 
\int_0^{\tau} (e^{-t} + e^t)^{\al} e^{-(\al - 1)t} \, dt \, . 
\] 
It is immediate from this expression for $\Ea(m_{\l})$ that $\Ea(m_{\l}) = \Ea(m_{\l^{-1}})$ and 
the relation \eqref{eq:relenergy} then follows by taking \eqref{Ea_eallam} into account. 

As expected we have 
$E_1(m_{e^{\tau}}) = 8 \pi \ \forall \, \tau \in \R$ and 
$\Ea(m_1) = 2^{2 \al + 1} \pi$. 

\bigskip 
\noindent It will be convenient to set 
\[ 
\beta := (\al - 1), 
\] 
to make the change of variables 
\[ 
s := \beta t, \quad \sigma := \beta\tau = (\al - 1)\log\l 
\] 
and to introduce the functions 
\begin{equation}\label{eq:gGdef} 
  \begin{split} 
 g(s) &:= (\cosh(s/\beta))^{\beta}\cosh s \\[\jot] 
 \text{and}& \\ 
 G(\sigma) &:= \frac{1}{\beta \sinh (\sigma/\beta)} 
   \int_0^{\sigma} (\cosh (s/\beta)) g(s) \, ds. 
  \end{split} 
\end{equation} 
Then \eqref{eq:Ea_mtau} becomes 
\begin{equation} \label{eq:EaG} 
\Ea(m_{e^{(\sigma/\beta)}}) = 
\frac{2^{2 \al + 1} \pi}{\beta \sinh (\sigma/\beta)} 
\int_0^{\sigma} (\cosh (s/\beta)) g(s) \, ds = 
2^{2 \al + 1} \pi G(\sigma). 
\end{equation} 
The lower bound $\cosh t > \frac12 e^t$ yields 
\[ 
g(s) > \left(\frac{e^{s/\beta}}{2}\right)^{\beta} \frac{e^s}{2} = \frac{e^{2s}}{2^{\al}}. 
\] 

We shall now prove the first inequality in \eqref{eq:lowerbdea}. 
So, we assume that $\sigma \geqslant 2$ and $1 < \al \leqslant 2$ 
and estimate $G$ from below as follows: 
\begin{align*} 
G(\sigma) &> \frac{1}{\beta \sinh (\sigma/\beta)} 
\int_{\sigma - 1}^{\sigma} (\cosh (s/\beta)) g(s) \, ds \\ 
&> \frac{1}{2^{\al} \beta \sinh (\sigma/\beta)} 
\int_{\sigma - 1}^{\sigma} (\cosh (s/\beta)) e^{2s} \, ds \\ 
&> \frac{e^{(2 \sigma - 2)}}{2^{\al}} \ 
\frac{1}{\beta \sinh (\sigma/\beta)} \int_{\sigma - 1}^{\sigma} (\cosh (s/\beta)) \, ds \\ 
&> \frac{e^{2 \sigma}}{2 e^2} \ \frac{\sinh (\sigma/\beta) - \sinh ((\sigma-1)/\beta)}{\sinh (\sigma/\beta)}. 
\end{align*} 
Keeping in mind that $0 \leqslant \beta \leqslant 1$, we have, 
\[ 
\sinh (\sigma/\beta) - \sinh ((\sigma-1)/\beta) > \frac{e^{\sigma/\beta}}{2} (1 - e^{-1/\beta}) 
> \sinh (\sigma/\beta) \left(\frac{e-1}{e}\right). 
\] 
It follows that 
\[ 
G(\sigma) - 1 > e^{2 \sigma} \left(\frac{e-1}{2e^3} - \frac{1}{e^4}\right), 
\] 
i.e., if $(\al - 1) \log \l \geqslant 2$ and $1 < \al \leqslant 2$ then 
\[ 
\xi(\al,\l) \geqslant 2^{2 \al + 1} \pi \left(\frac{e^2-e-2}{2e^4}\right) \l^{2 \al - 2} 
\] 
as claimed. 

\bigskip 
To estimate $G(\sigma) - 1$ from below for $\sigma \in [0,2]$, 
we calculate $G'(\sigma)$ from \eqref{eq:gGdef}: 
\[ 
G'(\sigma) = 
\frac{\cosh (\sigma/\beta)}{\beta \sinh (\sigma/\beta)} g(\sigma) - 
\frac{\cosh (\sigma/\beta)}{\beta^2 \sinh^2 (\sigma/\beta)} 
\int_0^{\sigma} (\cosh (s/\beta)) g(s) \, ds. 
\] 
Now 
\[ 
\frac{1}{\beta \sinh (\sigma/\beta)} 
\int_0^{\sigma} (\cosh (s/\beta)) g(s) \, ds = 
g(\sigma) - \frac{1}{\sinh (\sigma/\beta)} 
\int_0^{\sigma} (\sinh (s/\beta)) g'(s) \, ds . 
\] 

\bigskip 
Differentiating the expression for $g$ from \eqref{eq:gGdef} gives 
\begin{align*} 
g'(s) &= (\cosh(s/\beta))^{\beta-1} 
  (\sinh(s/\beta)\cosh s + \cosh(s/\beta) \sinh s) \nonumber \\ 
& = (\cosh(s/\beta))^{\beta-1} \sinh(\al s/\beta) . 
\end{align*} 

Therefore, we obtain:  
\begin{equation} \label{eq:Gprime} 
G'(\sigma) = \frac{\cosh (\sigma/\beta)}{\beta \sinh^2 (\sigma/\beta)} 
\int_0^{\sigma} (\sinh (s/\beta)) (\cosh(s/\beta))^{\beta - 1} 
\sinh(\al s/\beta) \, ds . 
\end{equation} 

We shall estimate $G'$ from below differently in the two regimes 
$0 \leqslant \sigma \leqslant \beta$ and $0 < \beta \leqslant \sigma \leqslant 2$. 
We start with the latter case for which we shall show that 
$G'$ is bounded below by a positive constant, independent of $\beta$. 

\bigskip 
Using $\dfrac{\cosh (\sigma/\beta)}{\sinh (\sigma/\beta)} > 1$ and 
$\dfrac{\sinh(\al s/\beta)}{\cosh(s/\beta)} \geqslant \tanh(\al s/\beta)$ in \eqref{eq:Gprime}, 
we obtain, for $\theta \in (0,1)$ and $\beta \leqslant \sigma$, 

\begin{align*} 
G'(\sigma) &> \frac{1}{\sinh (\sigma/\beta)} 
 \int_{\theta \beta}^{\sigma} (\tfrac{1}{\beta} \sinh (s/\beta)) 
  (\cosh(s/\beta))^{\beta} \tanh(\al s/\beta) \, ds \\ 
 & \geqslant \tanh \theta \ 
   \frac{\cosh(\sigma/\beta) - \cosh \theta}{\sinh (\sigma/\beta)}\\
 & \geqslant \tanh \theta \ \left( 1- \frac{\cosh \theta}{\sinh 1} \right) ,
\end{align*} 
where we also used that $\tanh(\al \theta) \geqslant \tanh \theta$ and 
$\cosh(s/\beta) \geqslant 1$ in the second estimate.

We now choose $\theta > 0$ so that $\cosh \theta \leqslant \frac12 \sinh 1$ and 
deduce that there exists $C > 0$, independent of anything, such that 
if $\al > 1$ and $\l \geqslant e$, i.e., $\tau \geqslant 1$ and $0 < \beta \leqslant \sigma$ then 
\begin{align}
G'(\sigma) \geqslant C > 0. \label{eq:EstGprime1}
\end{align} 
It follows that for $0 < \beta \leqslant \sigma$ we get
\begin{equation} \label{eq:Gest_mid} 
G(\sigma) \geqslant G(\beta) + C(\sigma - \beta). 
\end{equation} 

The lower bound on $G'$ for $\sigma \in (0,\beta]$ is straightforward. 
First use the inequality 
$\cosh(\sigma/\beta) (\cosh(s/\beta))^{\beta - 1} \geqslant (\cosh(s/\beta))^{\beta } \geqslant 1$ 
for every $s \in [0,\sigma]$ in \eqref{eq:Gprime} to get 
\[ 
G'(\sigma) \geqslant \frac{1}{\beta \sinh^2 (\sigma/\beta)} 
   \int_0^{\sigma} 
  (\sinh (s/\beta)) \sinh(\al s/\beta) \, ds . 
\] 
Next, use $(\sinh (s/\beta)) \sinh(\al s/\beta) \geqslant \frac{s^2}{\beta^2}$ 
and the inequality $\sinh x \leqslant x (\cosh x)$ for $x \geqslant 0$ to get 
\begin{align} 
G'(\sigma) &\geqslant \frac{1}{\beta (\cosh(\sigma/\beta))^2 \sigma^2} 
        \int_0^{\sigma} s^2 \, ds \nonumber \\ 
 & \geqslant \frac{\sigma}{3 \beta (\cosh 1)^2}; 
   \qquad \text{we have used }0 \leqslant \sigma/\beta \leqslant 1.  \label{eq:EstGprime2}
\end{align}
It follows that, 
\begin{equation} \label{eq:Gest_small} 
\text{for }0 \leqslant \sigma \leqslant \beta, \quad 
G(\sigma) - G(0) \geqslant \frac{\sigma^2}{6 \beta (\cosh 1)^2} 
\geqslant \frac{(\al-1)(\log \l)^2}{6(\cosh 1)^2}. 
\end{equation} 

We can now establish the last two estimates in \eqref{eq:lowerbdea}. 
If $\al - 1 \leqslant (\al - 1) \log \l \leqslant 2$ then, 
by \eqref{eq:Gest_mid} and \eqref{eq:Gest_small} we have that 
\[ 
\xi(\al,\l) \geqslant 2^{2 \al + 1} \pi \left( \big(G(\al - 1) -1 \big) 
  + C (\al - 1)(\log \l -1) \right) \geqslant C (\al -1)\log \l. 
\] 
If $ \log \l \leqslant 1$ then, we obtain again from \eqref{eq:Gest_small} that 
\[ 
\xi(\al,\l) \geqslant \frac{2^{2 \al + 1} \pi}{6 (\cosh 1)^2} 
  (\al - 1) (\log \l)^2 . 
\] 

Finally, $\Ea(m_{\l})$ increases with $\l$ because, from 
\eqref{eq:Gprime}, $G'$ is evidently positive. 
Moreover, in order to show \eqref{growthenergy} 
we note that it follows from \eqref{eq:EaG} that 
\[
\frac{\partial}{\partial \log \l} E_{\al,\l}(Id)= (\al - 1)2^{2 \al + 1} \pi G'((\al - 1)\log \l).
\]
For $1\leqslant \log \l \leqslant 2(\al - 1)^{-1}$ we use \eqref{eq:EstGprime1} in order to get
\[
\frac{\partial}{\partial \log \l} E_{\al,\l}(Id)\geqslant C (\al-1) \geqslant C(\al-1) \frac{|\log \l|}{1+|\log \l|}.
\]
For $0<\log \l \leqslant 1$ we use \eqref{eq:EstGprime2} to conclude
\[
\frac{\partial}{\partial \log \l} E_{\al,\l}(Id)\geqslant C(\al-1) \log \l \geqslant C(\al - 1) \frac{|\log \l|}{1+|\log \l|}. 
\]
The proof of Lemma \ref{l:eulambda} is complete.
\end{proof}

We can now give the 
\begin{proof}[Proof of Proposition \ref{p:clomob}] 
Having proved Lemma \ref{l:closeMob}, 
it only remains to establish \eqref{eq:alphalambd}. 
Apply Lemma \ref{l:Eaclose} with $v = u_M$, 
$M$ as provided by \eqref{L2close} and $\l \geqslant 1$ equal to 
the largest eigenvalue of $MM^*$. Then, 
with $\delta$ as in \eqref{L2close}, we have 
\begin{equation} \label{eq:estl1} 
 2^{2 \al + 1} \pi + \ep \geqslant \Ea(u) = E_{\al,\l}(u_M) 
  \geqslant E_{\al,\l}(Id) - \al \pi 2^{ 2\al +1} \l^{ 2\al - 2} \delta , 
\end{equation} 
where we used that
\begin{align*}
\| |\nabla_{S^2} u_M |^2 - 2 \,  \|_{L^1(S^2)} 
\leqslant& 
\left\| \nabla (u_M - Id) \right\|_{L^2(S^2)} 
\left\| \nabla (u_M + Id) \right\|_{L^2(S^2)}\\ 
\leqslant& 
\delta \sqrt{(8 \pi + \ep)(8\pi)} \leqslant \delta (16 \pi).
\end{align*}
Recall that 
\[ 
E_{\al,\l}(Id) = \Ea(m_{\l}) = 2^{2 \al + 1} \pi + \xi(\al, \l) 
\] 
and observe that $\ep$ in Lemma \ref{l:closeMob} can be chosen 
no larger than $\delta$. Therefore, \eqref{eq:estl1} can be rewritten as 
\begin{equation} \label{eq:estl2} 
\delta (1 + C' \l^{2 \al - 2}) \geqslant \xi(\al, \l). 
\end{equation} 

If $(\al - 1) \log \l \geqslant 2$, 
i.e. $\l^{2 \al - 2} \geqslant e^4$, 
then \eqref{eq:lowerbdea} provides the lower bound 
$\xi(\al, \l) \geqslant C \l^{2 \al - 2}$. 
So, \eqref{eq:estl2} cannot hold if 
$0 \leqslant \delta < \delta^* := \min\{\frac{C}{2C'}, \frac{C}{2}e^4\}$. 
Therefore, $\l^{2 \al - 2}$ must be less than $e^4$ 
and so, from \eqref{eq:lowerbdea} and \eqref{eq:estl2}, 
we deduce that 
\[ 
\delta (1 + C' e^4) \geqslant C (\al - 1) (\log \l) \min\{\log \l, 1\}. 
\] 
\end{proof} 

\section{Closeness in the $W^{2,p}$-norm} \label{s:closw2p}
In this section we prove a refinement of Proposition \ref{p:clomob}, 
showing closeness between $u_M$ and the identity 
in $W^{2,p}, \ p \in (\frac43,\frac32]$. 
The reason for this range of $p$ will become apparent in Proposition \ref{p:diff5}. 

\begin{proposition}\label{p:clomob2}
There exist $1 < \al_0$, $\delta_0>0$  and a constant $C$ depending only on $\al_0$ and $\delta_0$ 
such that, for every $1 < \al \leqslant \al_0$, every $0<\delta \leqslant \delta_0$ and 
every critical point $v\in W^{1,2\al}(S^2,S^2)$ of $E_{\al,\l}$ 
satisfying \eqref{eq:delta-close} and \eqref{eq:alphalambd} we have, for any $p \in (\frac43,\frac32]$, 
\begin{equation}\label{eq:sup2p} 
 \| v-Id\|_{L^\infty(S^2)} + 
 \left\|  \nabla (v - Id) \right\|_{W^{1,p}(S^2)} 
 \leqslant C (\delta+\al - 1). 
\end{equation} 
\end{proposition}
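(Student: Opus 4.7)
The plan is to linearize the PDE of Proposition~\ref{p:ELal} around $v = Id$ and run a Calder\'on-Zygmund bootstrap. Since $|\nabla Id|^2 = 2$ and $Id$ is harmonic (so $\Delta Id + 2\,Id = 0$), subtracting from the Euler-Lagrange equation for $v$ gives, with $w := v - Id$,
\begin{equation*}
(\Delta + 2)\, w = -(|\nabla v|^2 - 2)\,v - (f_1 + f_2),
\end{equation*}
and expanding $|\nabla v|^2 - 2 = 2\langle \nabla Id, \nabla w\rangle + |\nabla w|^2$ decomposes the right-hand side into a linear first-order term, a quadratic term, and the $O(\al-1)$ corrections.

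A pleasant feature of the situation is that the kernel of $\Delta + 2$ need not be handled via a separate projection: the geometric constraint $|v|^2 = 1$, combined with $\deg v = 1$ (so $E(v) \geqslant 4\pi$), gives, via the identities
\begin{equation*}
\|w\|_{L^2(S^2)}^2 = 8\pi - 2\int_{S^2} v \cdot Id\, dA_{S^2}, \qquad \int_{S^2} |\nabla w|^2\, dA_{S^2} = 2 E(v) - 4\int_{S^2} v \cdot Id\, dA_{S^2} + 8\pi,
\end{equation*}
the automatic bound $\|w\|_{L^2}^2 \leqslant \tfrac12 \|\nabla w\|_{L^2}^2 \leqslant \tfrac12 \delta^2$. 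In particular $\|w\|_{L^p} \leqslant C\delta$ for every $p \leqslant 2$.

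With this in hand, I would apply the standard Calder\'on-Zygmund estimate on $S^2$, $\|w\|_{W^{2,p}} \leqslant C(\|\Delta w\|_{L^p} + \|w\|_{L^p}) \leqslant C\|\Delta w\|_{L^p} + C\delta$, and bound each piece of the right-hand side separately. The first-order linear term satisfies $\|\langle \nabla Id, \nabla w\rangle v\|_{L^p} \leqslant C\|\nabla w\|_{L^p} \leqslant C\|\nabla w\|_{L^2} \leqslant C\delta$; the quadratic term is controlled via Gagliardo-Nirenberg interpolation (using the Sobolev embedding $W^{1,p}(S^2) \hookrightarrow L^{2p/(2-p)}(S^2)$, which holds for $p < 2$) by
\begin{equation*}
\||\nabla w|^2\|_{L^p(S^2)} \leqslant \|\nabla w\|_{L^2}\, \|\nabla w\|_{L^{2p/(2-p)}} \leqslant C\delta\, \|w\|_{W^{2,p}(S^2)},
\end{equation*}
which is absorbable for $\delta$ small. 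For $f_1$, the pointwise bound $|f_1| \leqslant C(\al - 1)|\nabla^2 v|$ (using $\chi_\l|\nabla v|^2/(2 + \chi_\l|\nabla v|^2) \leqslant 1$) yields $\|f_1\|_{L^p} \leqslant C(\al - 1)(\|w\|_{W^{2,p}} + 1)$, again absorbable. A more delicate analysis of $f_2$, exploiting that $|\nabla \log \chi_\l|$ is concentrated near a narrow band where $\chi_\l$ itself is small, together with the control \eqref{eq:alphalambd} on $\l$, produces $\|f_2\|_{L^p} \leqslant C(\delta + \al - 1)$.

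Combining everything yields an inequality of the form
\begin{equation*}
\|w\|_{W^{2,p}(S^2)} \leqslant C_1(\delta + \al - 1)\, \|w\|_{W^{2,p}(S^2)} + C_2(\delta + \al - 1),
\end{equation*}
from which $\|w\|_{W^{2,p}} \leqslant C(\delta + \al - 1)$ follows by absorption once $\delta$ and $\al - 1$ are sufficiently small. The $L^\infty$-estimate is an immediate consequence of the Sobolev embedding $W^{2,p}(S^2) \hookrightarrow C^{0,\alpha}(S^2)$ valid for $p > 1$. I expect the main technical obstacle to lie in the precise estimate of $\|f_2\|_{L^p}$: since $|\nabla \log \chi_\l|$ can be of order $\l$ in $L^\infty$, na\"ive H\"older bounds would produce a factor $(\al - 1)\l$ not controlled by \eqref{eq:alphalambd} in the regime $\al - 1 \ll \delta$, so one must carefully use the geometry of $\chi_\l$ (in particular the co-location of its peak and the small-$\chi_\l$ region of the denominator) to avoid this.
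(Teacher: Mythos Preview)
Your approach is correct and parallels the paper's proof closely: both linearize the Euler--Lagrange equation of Proposition~\ref{p:ELal} around $Id$, apply the Calder\'on--Zygmund estimate, control the quadratic term $|\nabla w|^2$ via Gagliardo--Nirenberg, and absorb the $(\al-1)|\nabla^2 v|$ contribution from $f_1$.

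The one substantive difference is how you obtain the preliminary $L^p$-bound on $w$. The paper integrates equation \eqref{eqnpsi} over $S^2$ to estimate the average of $\psi$ (using the $L^1$-bound on $f_2$ coming from Lemma~\ref{l:intchi}), and then invokes Sobolev--Poincar\'e. Your geometric identity $\|w\|_{L^2}^2 \leqslant \tfrac12\|\nabla w\|_{L^2}^2$, which exploits $|v|\equiv 1$ and $E(v) \geqslant 4\pi\deg v = 4\pi$, is cleaner and bypasses that detour entirely; it is a genuine simplification.

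On $f_2$: your diagnosis of the obstacle is right, but your proposed cure---using that $\chi_\l$ is small where $|\nabla\log\chi_\l|$ peaks---does not work as stated, since the factor $\chi_\l|\nabla v|^2/(2+\chi_\l|\nabla v|^2)$ is only small there if $|\nabla v|$ is a priori bounded, which is precisely what you are proving. The paper's resolution is simpler than you anticipate. Lemma~\ref{l:intchi} gives $\|\nabla\log\chi_\l\|_{L^2} \leqslant C(\log\l)^{1/2}$ for $\log\l \geqslant 1$, and then \eqref{eq:alphalambd} yields
\[
(\al-1)\|\nabla\log\chi_\l\|_{L^2} \leqslant C\sqrt{(\al-1)\cdot(\al-1)\log\l} \leqslant C\sqrt{(\al-1)\delta} \leqslant C(\delta+\al-1).
\]
H\"older then gives $\|f_2\|_{L^p} \leqslant C(\al-1)\|\nabla\log\chi_\l\|_{L^2}\|\nabla v\|_{L^{2p/(2-p)}}$, and the last factor is bounded by $C(1+\|w\|_{W^{2,p}})$ via the Sobolev embedding $W^{1,p}\hookrightarrow L^{2p/(2-p)}$, hence absorbable---no pointwise cancellation in $\chi_\l$ is needed.
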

\begin{proof}
We define a map $\psi \colon S^2\to \R^3$ by
\[
v=Id+\psi
\]
and we obtain from Proposition \ref{p:clomob} that
\[
\| \nabla \psi \|_{L^2(S^2)} \leqslant \delta.
\]
By Proposition \ref{p:ELal}, $\psi$ satisfies 
\begin{align}
 \Delta \psi=& -2 \psi-2 \langle  \nabla \psi, \nabla Id \rangle Id - 
 |\nabla \psi|^2 \psi-2\langle\nabla \psi ,\nabla Id \rangle \psi - 
 |\nabla \psi|^2 Id - f_1 - f_2. \label{eqnpsi}
\end{align} 
We shall first estimate the average of $\psi$ 
by integrating this equation and 
observing from \eqref{def.fi} that 
\begin{equation}\label{eq:f1bd} 
|f_1(\z)| \leqslant C(\al - 1) |\nabla^2 v (\z)|\leqslant C(\al -1) (1+|\nabla^2 \psi(\z)|)
\end{equation} 
and that 
\begin{equation}\label{eq:f2bd} 
|f_2(\z)| \leqslant C(\al - 1) |(\nabla \log \chi_{\l})(\z)| \, 
|\nabla v(\z)| . 
\end{equation} 
When integrating \eqref{eqnpsi}, keep also in mind that 
$\|\psi\|_{L^\infty(S^2)} \leqslant 2$ and make use of
Proposition \ref{p:clomob} and Lemma \ref{l:intchi} 
to conclude that 
\begin{align}
|\mint_{S^2} \psi \, dA_{S^2} |\leqslant 
 & C\delta + C(\al -1) \|\nabla^2 v \|_{L^1(S^2)} + 
   C(\al - 1) \| \nabla v \|_{L^2(S^2)}
      \| \nabla \log \chi_\l \|_{L^2(S^2)} \nonumber \\
\leqslant & C (\delta + \al - 1) + C(\al -1) \|\nabla^2 \psi \|_{L^1(S^2)}. 
\label{estmeanpsi}
\end{align} 
 
This estimate on the average of $\psi$ allows us to 
use standard $L^p$-estimates for the Laplacian 
and the Sobolev-Poincar\'{e} inequality to conclude that, 
for every $p \in (\frac43,\frac32]$,
\begin{align*}
\| \nabla \psi \|_{W^{1,p}(S^2)} \leqslant 
 & C\left(\|\Delta \psi \|_{L^p(S^2)} + 
   \|\psi \|_{L^p(S^2)}\right)\\
\leqslant & C\left(\|\Delta \psi \|_{L^p(S^2)} + 
   \|\nabla \psi \|_{L^2(S^2)} + |\mint_{S^2} \psi \, dA_{S^2} | \right)\\
\leqslant & C\left( \|\Delta \psi \|_{L^p(S^2)} + 
   \delta + \al - 1 + (\al -1) \|\nabla^2 \psi \|_{L^p(S^2)} \right). 
\end{align*} 
By picking $\al_0 > 1$ sufficiently close to 1 so that $C (\al_0-1)\leqslant \frac12$ we get
\begin{equation}
\| \nabla \psi \|_{W^{1,p}(S^2)} \leqslant C \left( 
  \|\Delta \psi \|_{L^p(S^2)} + \delta + \al - 1 \right). \label{est:lp} 
\end{equation} 
The plan now is to estimate $\|\Delta \psi \|_{L^p(S^2)}$, 
by using \eqref{eqnpsi}. The $L^p$ norm of 
the right hand side of \eqref{eqnpsi} requires us to estimate 
the $L^{2p}$-norm of $\nabla \psi$ which we do by means of 
the Gagliardo-Nirenberg interpolation inequality: 
\[ 
\| \nabla \psi \|_{L^{2p}(S^2)}^2 \leqslant 
C  \| \nabla \psi \|_{L^2(S^2)} \big(\| \nabla^2 \psi \|_{L^p(S^2)} + \| \nabla \psi \|_{L^2(S^2)}\big). 
\]
Using \eqref{eqnpsi}, \eqref{estmeanpsi}, a Poincar\'e-type inequality, H\"{o}lder's inequality, 
the Gagliardo-Nirenberg estimate from above, 
\eqref{eq:f1bd}, \eqref{eq:f2bd} and Lemma \ref{l:intchi}, we get
\begin{align*}
\|\Delta \psi \|_{L^p(S^2)}\leqslant 
 & C ( \|\psi - \mint_{S^2}\psi \, dA_{S^2} \|_{L^p(S^2)} + 
  |\mint_{S^2}\psi \, dA_{S^2}| + \|\nabla \psi\|_{L^2(S^2)}\\ 
  &+ \|\nabla \psi\|_{L^{2p}(S^2)}^2 + \|f_1\|_{L^p(S^2)} + \|f_2\|_{L^2(S^2)} )\\
\leqslant & C (\delta + \al - 1)(1 + C (\al -1 + \delta) \|\nabla^2 \psi \|_{L^p(S^2)} . 
\end{align*} 
We can insert this estimate into \eqref{est:lp} 
and then choose $\al_0-1$ and $\delta_0$ small in order to get
\[ 
\|\nabla \psi \|_{W^{1,p}(S^2)} \leqslant C (\delta + \al - 1) .
\] 
Using once more \eqref{estmeanpsi} and 
the Sobolev embedding theorem, we get, for any $p \in (\frac43,\frac32]$,
\[ 
\| \psi \|_{L^\infty(S^2)} \leqslant 
C\| \psi-\mint_{S^2} \psi \, dA_{S^2} \|_{W^{2,p}(S^2)} + 
C\left| \mint_{S^2} \psi \, dA_{S^2} \right| \leqslant C (\delta + \al - 1). 
\] 
This concludes the proof. 
\end{proof}

\section{A Bound on $\l$}\label{s:lbound} 
In this section we shall show how the estimates \eqref{eq:sup2p} and \eqref{eq:alphalambd} 
imply a very slow growth on 
$ \frac{\partial}{\partial \log \l} E_{\al,\l} (Id)$ which, when coupled with 
\eqref{growthenergy}, implies a bound on $\l$, independent of how close $\al$ is to 1. 
We start by computing $\frac{d}{d \l}E_{\al,\l}(v)$ directly from \eqref{def:Ealphlam} and \eqref{eq:chil}: 
\begin{align*} 
\log(\chi_{\l}(\z)) &= 2 \log(1+\l^2 |\z|^2) - 2 \log \l - 2 \log(1 + |\z|^2) \\ 
\frac{d}{d \l} \log(\chi_\l(\z)) &= \frac{4\l |\z|^2}{1+\l^2 |\z|^2} - \frac{2}{\l} \\ 
\frac{d}{d \log \l} \log(\chi_\l(\z)) &= \frac{2 (\l^2 |\z|^2- 1)}{\l^2 |\z|^2+1}. 
\end{align*} 
\begin{align*} 
\frac{d}{d \log \l}E_{\al,\l}(v) &= \frac12 \frac{d}{d \log \l} 
\int_{S^2} \left( 2 + \chi_\l |\nabla_{S^2} v|^2 \right)^{\al} \frac{1}{\chi_\l} \, dA_{S^2} \\ 
&= \int_{S^2} ( 2 + \chi_\l |\nabla_{S^2} v|^2)^{\al - 1} 
\left((\al - 1) |\nabla_{S^2} v|^2 - \frac{2}{\chi_{\l}}) \right) z(\l \z) \, dA_{S^2} 
\end{align*} 
where, as in section \ref{s:mob}, $z(\z) := \dfrac{|\z|^2- 1}{|\z|^2+1} \in [-1,1)$. 
 
We wish to estimate \,$\dfrac{d}{d \log \l}E_{\al,\l}(Id) \, - \, \dfrac{d}{d \log \l}E_{\al,\l}(v)$\, 
in terms of a suitable norm of the difference between $Id$ and $v$. 
\begin{align} \label{eq:diff1} 
\frac{d}{d \log \l} E_{\al,\l}&(Id) \, - \, \frac{d}{d \log \l}E_{\al,\l}(v) \notag \\ 
&\hspace{-20pt}= \ - \int_{S^2}  \big( ( 2 + 2 \chi_\l )^{\al - 1} - ( 2 + \chi_\l |\nabla_{S^2} v|^2)^{\al - 1} \big) 
\frac{2 z(\l \z)}{\chi_{\l}}\, dA_{S^2} \\ 
&\hspace{-10pt}+ (\al - 1) \int_{S^2} 
\big( 2\,( 2 + 2 \chi_\l)^{\al - 1} - |\nabla_{S^2} v|^2 \, ( 2 + \chi_\l |\nabla_{S^2} v|^2)^{\al - 1} \big) 
z(\l \z) \, dA_{S^2} \, . \notag 
\end{align} 
As in the proof of Lemma \ref{l:Eaclose}, there is a positive function 
$g : S^2 \to \R_+$ whose value at $p$ lies between 
$|\nabla_{S^2} v(p)|^2$ and 2 = $|\nabla_{S^2} Id|^2$ such that 
\[ 
\big( ( 2 + 2 \chi_\l )^{\al - 1} - ( 2 + \chi_\l |\nabla_{S^2} v|^2)^{\al - 1} \big) 
= (\al - 1)( 2 + g \chi_\l )^{\al - 2} \chi_{\l}(2 - |\nabla_{S^2} v|^2). 
\] 
Similarly, 
\begin{align*} 
2\,( 2 + 2 \chi_\l)^{\al - 1}&\mbox{} - |\nabla_{S^2} v|^2 \, ( 2 + \chi_\l |\nabla_{S^2} v|^2)^{\al - 1} \\ 
&\quad = ( 2 + 2 \chi_\l)^{\al - 1} (2 - |\nabla_{S^2} v|^2) 
\\
&\quad \quad + (\al - 1)( 2 + g \chi_\l )^{\al - 2} \chi_{\l}(2 - |\nabla_{S^2} v|^2) |\nabla_{S^2} v|^2 \,. 
\end{align*} 
If $\al \leqslant 2$, 
\[ 
( 2 + g \chi_\l )^{\al - 2} \leqslant 1. 
\] 
Moreover, 
\begin{align*} 
\frac{\chi_{\l} |\nabla_{S^2} v|^2}{2 + g \chi_\l} &\leqslant 
\begin{cases} 
\frac12 |\nabla_{S^2} v|^2,&\text{if $|\nabla_{S^2} v|^2 \geqslant 2$} \\ 
1,&\text{if $|\nabla_{S^2} v|^2 \leqslant 2$}, 
\end{cases} \\ 
&\leqslant 1 + |\nabla_{S^2} v|^2 
\end{align*} 
and 
\[ 
( 2 + 2 \chi_\l )^{\al - 1} \leqslant 4^{\al - 1} \l^{2 \al - 2}, \qquad 
( 2 + g \chi_\l )^{\al - 1} \leqslant 4^{\al - 1} \l^{2 \al - 2} (1 + |\nabla_{S^2} v|^{2 \al - 2}). 
\] 
Therefore, using that $|z|\leqslant 1$,
\begin{equation}\label{eq:diff2} 
\left |\big( ( 2 + 2 \chi_\l )^{\al - 1} - ( 2 + \chi_\l |\nabla_{S^2} v|^2)^{\al - 1} \big) 
\frac{2 z(\l \z)}{\chi_{\l}} \right| \leqslant 2 (\al - 1) |2 - |\nabla_{S^2} v|^2|
\end{equation} 
and 
\begin{align}\label{eq:diff3} 
\big|\big( 2\,( 2 &+ 2 \chi_\l)^{\al - 1} 
- |\nabla_{S^2} v|^2 \, ( 2 + \chi_\l |\nabla_{S^2} v|^2)^{\al - 1} \big) z(\l \z) \big| \notag \\ 
&\leqslant C \l^{2 \al - 2} \, \big|2 - |\nabla_{S^2} v|^2\big| \, (1 + (\al - 1) |\nabla_{S^2} v|^{2\al}). 
\end{align} 
Using \eqref{eq:diff2} and \eqref{eq:diff3} in \eqref{eq:diff1} we can finally estimate 
\begin{align} \label{eq:diff4} 
\frac{d}{d \log \l}E_{\al,\l}&(Id) \, - \, \frac{d}{d \log \l}E_{\al,\l}(v) \notag \\ 
&\leqslant C(\al - 1)(1 + \l^{2 \al - 2}) 
\int_{S^2}\big|2 - |\nabla_{S^2} v|^2\big| \, (1 + (\al - 1) |\nabla_{S^2} v|^{2\al}) \, dA_{S^2} \notag \\ 
&\leqslant C(\al - 1)(1 + \l^{2 \al - 2})  
\| \nabla(v - Id) \|_{L^2(S^2)} (\,\| \nabla Id \|_{L^2(S^2)} + \| \nabla v \|_{L^2(S^2)}) \\ 
&\quad +C(\al - 1)^2(1 + \l^{2 \al - 2}) \| \nabla(v - Id) \|_{L^{2 \al + 2}(S^2)} \notag \\
&\quad \cdot(\,\| \nabla Id \|_{L^{2 \al + 2}(S^2)} + \| \nabla v \|_{L^{2 \al + 2}(S^2)}) 
\| \nabla v \|^{2 \al}_{L^{2 \al + 2}(S^2)}. \notag  
\end{align} 
\begin{proposition} \label{p:diff5} 
There exist $1<\al_0$, $\delta_0>0$, possibly smaller than those in Proposition \ref{p:clomob2}, 
such that if $v\in W^{1,2\al}(S^2,S^2)$ 
is a critical point of $E_{\al,\l}$ satisfying \eqref{eq:delta-close} and \eqref{eq:alphalambd}, 
$1<\al \leqslant \al_0, \ 0<\delta \leqslant \delta_0$, then 
\begin{equation}\label{eq:lbound1} 
\log \l \leqslant C(\delta + \al - 1). 
\end{equation} 
\end{proposition}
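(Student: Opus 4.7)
The plan is to couple the lower bound \eqref{growthenergy} for $\frac{\partial}{\partial \log \l}E_{\al,\l}(Id)$ with an upper bound on the same quantity obtained by (i) observing that $\frac{\partial}{\partial \log \l}E_{\al,\l}(v) = 0$ whenever $v$ is a critical point of $E_{\al,\l}$, and (ii) using the already derived estimate \eqref{eq:diff4} to bound $\frac{\partial}{\partial \log \l}E_{\al,\l}(Id) - \frac{\partial}{\partial \log \l}E_{\al,\l}(v)$ via the $W^{2,p}$-closeness of $v$ to the identity.

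For the vanishing in (i), set $u(\z) := v(\z/\l)$, so that $v = u \circ m_\l$. By the equivalence discussed around \eqref{Ea_eallam}, $v$ being a critical point of $E_{\al,\l}$ is equivalent to $u$ being a critical point of $\Ea$; consequently $u_s := u \circ m_s$ is a critical point of $E_{\al,s}$ for every $s > 0$. Differentiating the $s$-independent identity $E_{\al,s}(u_s) = \Ea(u)$ at $s = \l$, and using that the variational derivative of $E_{\al,s}$ at $u_s$ annihilates the $S^2$-tangent admissible variation $\partial_s u_s$ (since $u_s$ is critical), the chain rule delivers $\frac{\partial}{\partial \log \l}E_{\al,\l}(v) = 0$.

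For (ii), I would estimate the right hand side of \eqref{eq:diff4} using Propositions \ref{p:clomob} and \ref{p:clomob2}. The latter gives $\|\nabla(v-Id)\|_{W^{1,p}(S^2)} \leqslant C(\delta + \al - 1)$, and the Sobolev embedding $W^{1,p}(S^2) \hookrightarrow L^{2\al+2}(S^2)$, valid for $p > \tfrac{4}{3}$ once $\al$ is sufficiently close to $1$, upgrades this to $\|\nabla(v-Id)\|_{L^{2\al+2}(S^2)} \leqslant C(\delta + \al - 1)$; Proposition \ref{p:clomob} gives $\|\nabla(v-Id)\|_{L^2(S^2)} \leqslant \delta$. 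Hypothesis \eqref{eq:alphalambd} forces $(\al-1)\log \l$ to be small (distinguishing $\log \l \leqslant 1$ and $\log \l > 1$), so $\l^{2\al-2} = e^{2(\al-1)\log \l}$ remains bounded by a universal constant. Plugging these estimates into \eqref{eq:diff4} produces
\[
\frac{\partial}{\partial \log \l}E_{\al,\l}(Id) \leqslant C(\al - 1)(\delta + \al - 1).
\]

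Comparing with the lower bound \eqref{growthenergy} and cancelling the common factor $(\al - 1)$ yields $\frac{\log \l}{1 + \log \l} \leqslant C(\delta + \al - 1)$, which, after possibly shrinking $\delta_0$ and $\al_0 - 1$, rearranges to \eqref{eq:lbound1}. The main obstacle is step (i): the chain-rule manipulation hinges on the regularity of $\partial_s u_s|_{s=\l}$ and on its being a genuine admissible test function for the Euler-Lagrange equation of $E_{\al,\l}$ at $v$; once this variational identity is secured, the remaining estimates are a fairly routine combination of the bounds from Section \ref{s:closw2p}, the constraint \eqref{eq:alphalambd}, and Sobolev embedding.
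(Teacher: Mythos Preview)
Your proposal is correct and follows essentially the same route as the paper: the vanishing of $\frac{d}{d\log\tau}E_{\al,\tau}(v)|_{\tau=\l}$ via the variational identity for the one-parameter family of reparametrised maps, the Sobolev embedding $W^{1,p}\hookrightarrow L^{2\al+2}$ (the paper takes the endpoint choice $p=\tfrac{2\al+2}{\al+2}\in(\tfrac43,\tfrac32]$), the bound on $\l^{2\al-2}$ from \eqref{eq:alphalambd}, and the comparison with \eqref{growthenergy} are all exactly what the paper does. Your regularity concern in step~(i) is not an obstacle, since critical points of $E_{\al,\l}$ are smooth by Sacks--Uhlenbeck, so $\partial_s u_s|_{s=\l}$ is a legitimate smooth tangent variation.
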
 
\begin{proof} 
As in Proposition \ref{p:clomob2}, we set $\psi := v - Id$. By the Sobolev embedding, 
\[ 
\|\nabla \psi\|_{L^{2 \al + 2}(S^2)} \leqslant C(\al) \|\nabla \psi \|_{W^{1,p}(S^2)}, 
\quad p := \frac{2 \al + 2}{\al + 2}. 
\] 
Note that, since we may assume $\al_0 \leqslant 2$, we have that $p \in (\frac43,\frac32]$, 
as in Proposition \ref{p:clomob2}. Moreover, $C(\al)$ can then be chosen independent of $\al$. 
So, taking $\al_0$ and $\delta_0$ as in Proposition \ref{p:clomob2}, we get, from \eqref{eq:sup2p},  
\begin{equation}\label{eq:est1p} 
\|\nabla \psi\|_{L^{2 \al + 2}(S^2)} \leqslant C (\delta + \al - 1).
\end{equation} 
In particular, $\|\nabla v\|_{L^{2 \al + 2}(S^2)} \leqslant 
\|\nabla \psi\|_{L^{2 \al + 2}(S^2)} + \|\nabla Id\|_{L^{2 \al + 2}(S^2)} \leqslant C$. 

By \eqref{eq:alphalambd} we have 
\begin{equation}\label{eq:lpoweral} 
\l^{2 \al - 2} < \max\{e^{2C \delta}, e^{2 \al_0 - 2}\}. 
\end{equation} 

Since $v$ is a critical point of $E_{\al,\l}$ we have 
$\left.\frac{d}{d \log \tau}\right|_{\tau = \l}E_{\al,\tau}(v) = 0$. In order to see this we note that
\[
E_{\al,\tau}(v) = E_{\al,\l}(v_{\l \tau^{-1}}) 
\]
which gives 
\[
\frac{d}{d \log \tau} E_{\al,\tau}(v) |_{\tau=\l} = 
\left( \tau \frac{d}{d \tau}E_{\al,\tau}(v) \right) |_{\tau=\l}= E'_{\al,\l}(v)(w), 
\]
where $w$ is the vector field along $v$ given by 
\[
w = \left( \tau \frac{d}{d \tau} v_{\l \tau^{-1}}\right) |_{\tau=\l}.
\] 
But $v$ is a critical point of $E_{\al,\l}$ and therefore $E'_{\al, \l}(v) = 0$.
 
It then follows from \eqref{growthenergy}, \eqref{eq:diff4}, \eqref{eq:est1p} and \eqref{eq:lpoweral} that 
\begin{equation}\label{eq:lbound2} 
C'^{\,-1} (\al - 1) \frac{\log \l}{1+ \log \l} \leqslant \frac{d}{d \log \l}E_{\al,\l}(Id) \leqslant 
C(\al - 1)(\delta + \al - 1). 
\end{equation} 
The estimate \eqref{eq:lbound1} now follows by taking $\al_0 - 1$ and $\delta_0$ sufficiently small. 
\end{proof} 

\section{Optimal $\l$ and Better Closeness in the $W^{2,p}$-norm} \label{s:betterclosw2p} 
Of course, we wish to prove that $\l = 1$. However, the choice of $\l$ 
provided by Proposition \ref{p:clomob} has some flexibility and therefore, 
at the moment, we cannot hope to do better than \eqref{eq:lbound1}. 
So we have to choose $\l$ optimally, which we do as follows. 

Proposition \ref{p:clomob} suggests that we should choose $M$ so as to minimize 
$\| \nabla (u_M - Id) \|^2_{L^2(S^2)} = \| \nabla (u - M^{-1}) \|^2_{L^2(S^2)} $. 
This minimization is possible because, as $M \to \infty$ in the M\"{o}bius group $PSL(2,\C)$, 
$\| \nabla (u - M^{-1}) \|^2_{L^2(S^2)} \to \| \nabla u \|^2_{L^2(S^2)} + \| \nabla Id \|^2_{L^2(S^2)} 
\geqslant 16 \pi$ and therefore, we only need to minimize $\| \nabla (u_M - Id) \|^2_{L^2(S^2)} $ 
over a compact subset of $PSL(2,\C).$ 
In order to see this we note that up to rotations, $M$ can only go to infinity if 
it approaches a dilation from the south pole towards the north pole by a huge factor $\l$, 
so that the energy of $m_{\l}$ is concentrated on a small disk $D$ centred at the south pole. 
Take $D$ so small that the energy of $u$ on $D$ is less than $\ep$ and 
the energy of $m_{\l}$ outside of $D$ is less than $\ep$. By breaking up the integral for 
\[
\| \nabla (u-M^{-1}) \|_{L^2(S^2)}^2 = \| \nabla u \|_{L^2(S^2)}^2 + 2 \langle \nabla u, \nabla M^{-1} \rangle_{L^2(S^2)} + \| \nabla M^{-1} \|_{L^2(S^2)}^2
\]
into the contributions from $D$ and its complement, we see that  
\[
\langle \nabla u, \nabla M^{-1} \rangle_{L^2(S^2)}
\]
is small and noting that by conformal invariance 
$\| \nabla M^{-1}\|_{L^2(S^2)}=\| \nabla Id\|_{L^2(S^2)}$, the claim follows. 

From now on, 
we shall assume that $M$ does minimize $\| \nabla (u_M - Id) \|_{L^2(S^2)}$. 
Of course, all the estimates proved so far still hold. 

As usual, we set $v := u_M$ and assume that 
$v$ satisfies the hypotheses of Proposition \ref{p:diff5}. 
We notice that, by \eqref{eq:sup2p}, 
$v$ approaches the identity map pointwise as $\delta$ and $(\al-1)$ tend to zero. 
So we may write 
\[ 
v = Id + \psi = \exp_{Id} \hat{\psi} \quad (= Id + \hat{\psi}  + O(|\hat{\psi}|^2)); 
\qquad \hat{\psi} \in T_{Id} W^{1,2\al}(S^2,S^2).  
\] 
More explicitly, if $\mathbf{x} = (x,y,z) \in S^2 \subset \R^3$, then 
\begin{gather} 
v(\mathbf{x}) = \mathbf{x} \sqrt{1 - |\hat{\psi}(\mathbf{x})|^2} 
\, + \, \hat{\psi}(\mathbf{x}), \qquad \hat{\psi}(\mathbf{x}) \cdot \mathbf{x} \equiv 0. \notag \\ 
\hat{\psi}(\mathbf{x}) = \psi(\mathbf{x}) + \tfrac12 |\psi(\mathbf{x})|^2 \mathbf{x}\,, \qquad 
\psi(\mathbf{x}) = \hat{\psi}(\mathbf{x}) -
\left( 1 - \sqrt{1 - |\hat{\psi}(\mathbf{x})|^2}\right) \mathbf{x},  \label{eq:psipsih} \\ 
|\hat{\psi}|^2 = |\psi|^2(1 - \tfrac14|\psi|^2) \leqslant |\psi|^2 = 2(1 - \sqrt{1 - |\hat{\psi}|^2}). \notag 
\end{gather} 
It follows that 
\begin{align}
|\nabla \psi -\nabla \hat{\psi}|&=O(|\hat{\psi}| \,  |\nabla \hat{\psi}|) + O(|\hat{\psi}|^2)
= O(|\psi| \,  |\nabla \psi|) + O(|\psi|^2),\nonumber \\
|\nabla^2 \psi -\nabla^2 \hat{\psi}| &=O(|\hat{\psi}| |\nabla^2 \hat{\psi}|) 
+ O(|\nabla \hat{\psi}|^2) + O(|\hat{\psi}|^2)= O(|\psi| |\nabla^2 \psi|) 
+ O(|\nabla \psi|^2) +O(|\psi|^2)\label{eq:page14}
\end{align} 
and therefore, we derive the following equation for $\hat{\psi}$ by taking 
the component of \eqref{eqnpsi} orthogonal to the identity: 
\begin{align}
(\Delta \hat{\psi})^T + 2 \hat{\psi} = & - 2\langle\nabla \hat{\psi} ,\nabla Id \rangle \hat{\psi} 
- f_1^T - f_2^T + O(| \nabla \hat{\psi}|^2)+O(|\hat{\psi}|^2) \label{eqnpsih}
\end{align} 
where $T$ denotes orthogonal projection of a vector at $\mathbf{x} \in S^2$ onto 
$T_{\mathbf{x}}S^2$, i.e. onto the orthogonal complement of $\mathbf{x}$, 
and $f_1$ and $f_2$ are given by \eqref{def.fi} and \eqref{def.fi1}. 

Next, we let $e_1$, $e_2$ be an orthonormal basis for $T_{\mathbf{x}}S^2$ so that 
$D_{e_i} e_j(\mathbf{x}) = 0$, where $D$ is the covariant derivative on $TS^2$. 
We calculate at $\mathbf{x}$: 
\[
D_{e_i} \hat{\psi}(\mathbf{x}) = 
e_i(\hat{\psi})(\mathbf{x}) - ((e_i(\hat{\psi}) \cdot \mathbf{x})\mathbf{x} = 
e_i(\hat{\psi})(\mathbf{x})+(\hat{\psi}(\mathbf{x})\cdot e_i(\mathbf{x})) \mathbf{x}
\]
and, since $\hat{\psi}(\mathbf{x}) = \sum_{i=1}^2 (\hat{\psi}(\mathbf{x}) \cdot e_i)e_i$, 
we conclude that 
\[ 
(\Delta \hat{\psi})^T + \hat{\psi} = \Delta_{TS^2}\hat\psi 
\] 
where $\Delta_{TS^2}$ is the (rough) connection Laplacian on vector fields on $S^2$. 
Next it follows from \cite{chavel}, Proposition A3, that
\[
\Delta_H \hat{\psi} =\Delta_{TS^2}\hat\psi -\hat\psi,
\]
where $\Delta_H$ is the (negative semi-definite) Hodge Laplacian.
Furthermore, it was calculated in \cite{smith75} that
\[ 
-\Delta_{TS^2}\hat{\psi} - \hat{\psi} =  -(\Delta \hat{\psi})^T -2\hat{\psi}= J \hat{\psi} 
\] 
where $J$ is the Jacobi operator of the energy functional at the identity on $S^2$. 
By standard Hodge theory, the spectrum of $\Delta_{TS^2}$ is the same as 
the spectrum of $\Delta$ on functions shifted up by 1, i.e., 
the spectrum of $\Delta_{TS^2}$ is $\{-1, -5, \dotsc\}$. 
Indeed, if $\Delta \phi + c \phi = 0$ then 
$\Delta_{TS^2} (\nabla \phi) + (c-1) \nabla \phi = 0$ and 
$\Delta_{TS^2} (*\nabla \phi) + (c-1) (*\nabla \phi) = 0$ 
where $*$ is rotation by $90^{\circ}$ in $TS^2$. 
These two equations follow from the above relation between 
$\Delta_H$ and $\Delta_{TS^2}$ and the facts that 
the exterior derivative $d$ and $*$ both commute with $\Delta_H$; 
the second equation follows from the first and 
the conformal invariance of the Dirichlet integral in two dimensions. 
So, the kernel of $J$ consists precisely of the span of 
the gradient of the linear functions on $S^2$ and their $90^{\circ}$ rotations. 
But this is precisely the tangent space $Z$ of the M\"{o}bius group at the identity; 
the flow of the gradient of a linear function is a dilation and 
the flow of a $90^{\circ}$ rotation of the gradient of a linear function is a rotation. 

We shall be making use of the elliptic estimate 
\[ 
\|\hat\psi\|_{W^{2,p}} \leqslant C (\| J\hat{\psi} \|_{L^p} + \| \hat{\psi}_0 \|_{L^p}) 
\] 
where $\hat{\psi}_0$ is the orthogonal projection of $\hat{\psi}$ onto the kernel of $J$ 
with respect to the inner product on $L^2(S^2)$. 
We start by estimating $\hat{\psi}_0$. 
From the minimizing property of $\| \nabla (v - Id) \|^2_{L^2(S^2)}$ it follows that 
\[ 
-\int_{S^2}\nabla v \cdot \nabla \xi \, dA_{S^2} + \int_{S^2}\nabla Id \cdot \nabla \xi \, dA_{S^2} = 0 
\quad \forall \, \xi \in Z.  
\] 
Now $\nabla Id \cdot \nabla \xi = \Div \xi$ and $\int_{S^2} (\Div \xi) \, dA_{S^2} = 0$. Therefore 
\begin{equation}\label{eq:orth1} 
\int_{S^2} v \cdot \Delta \xi \, dA_{S^2} = 0 \quad \forall \, \xi \in Z. 
\end{equation} 
We have 
\[ 
\Delta \xi (\mathbf{x}) = (\Delta \xi)^T(\mathbf{x}) + (\Delta \xi \cdot \mathbf{x})\mathbf{x} 
\] 
and, since $\xi \in Z, \ (\Delta \xi)^T = - 2 \xi$. If, as before, 
$e_1$, $e_2$ is an orthonormal basis for $T_{\mathbf{x}}S^2$ so that 
$D_{e_i} e_j(\mathbf{x}) = 0$, then 
\begin{align*} 
\Delta \xi \cdot \mathbf{x} &= \sum_{i=1}^2 \bigg(e_i\big(e_i(\xi) \cdot \mathbf{x} \big) - 
\big( e_i(\xi) \cdot e_i \big)(\mathbf{x})\bigg) \\ 
&= -\sum_{i=1}^2 \bigg(e_i(\xi \cdot e_i)(\mathbf{x}) + 
\big( e_i(\xi) \cdot e_i \big)(\mathbf{x})\bigg)\\ 
&= -\sum_{i=1}^2 \bigg(\big(e_i\big(\xi) \cdot e_i \big)(\mathbf{x}) + 
\big( e_i(\xi) \cdot e_i \big)(\mathbf{x})\bigg)\\ 
& = - 2 \Div \xi(\mathbf{x}) 
\end{align*} 
where we used $\xi \cdot \mathbf{x} = 0$ in the second line and 
$\xi \cdot e_i(e_i) = \xi \cdot D_{e_i}e_i = 0$ in the third line.
Using these calculations of $\Delta \xi$ in \eqref{eq:orth1} yields 
\[ 
\int_{S^2} v \cdot \xi \, dA_{S^2} + \int_{S^2} (v \cdot \mathbf{x}) (\Div \xi) \, dA_{S^2} = 0, 
\] 
and, taking into account \eqref{eq:psipsih}, the fact that $\xi$ is tangent to $S^2$ and 
$\int_{S^2} (\Div \xi) \, dA_{S^2} = 0$, we obtain 
\[ 
\int_{S^2} \hat{\psi} \cdot \xi \, dA_{S^2} = 
- \int_{S^2} \sqrt{1 - |\hat{\psi}|^2} (\Div \xi) \, dA_{S^2} = 
\int_{S^2} \big(1 - \sqrt{1 - |\hat{\psi}|^2}\big) (\Div \xi) \, dA_{S^2}. 
\] 
We now choose $\xi = \hat{\psi}_0$ and get 
\[ 
\|\hat{\psi}_0\|^2_{L^2(S^2)} \leqslant 
\|\hat{\psi}\|^2_{L^{\infty}(S^2)} \int_{S^2} |\nabla \hat{\psi}_0| \, dA_{S^2}. 
\] 
But $(\Delta \hat{\psi}_0)^T = -2 \hat{\psi}_0$ because $\hat{\psi}_0 \in Z$ and therefore 
\begin{align*}
\int_{S^2} |\nabla \hat{\psi}_0| \, dA_{S^2} \leqslant& 
C \left( \int_{S^2} |\nabla \hat{\psi}_0|^2 \, dA_{S^2}\right)^{1/2} = \ 
2C \left( \int_{S^2} -\Delta \hat{\psi}_0 \cdot \hat{\psi}_0 \, dA_{S^2} \right)^{1/2}\\
=&  2C \|\hat{\psi}_0\|_{L^2(S^2)}. 
\end{align*}
We have proved that, for $p \in [\frac43,\frac32]$, 
\begin{equation} \label{eq:orth2} 
\|\hat{\psi}_0\|_{L^p(S^2)} \leqslant C \|\hat{\psi}_0\|_{L^2(S^2)} \leqslant 
C \|\hat{\psi}\|^2_{L^{\infty}(S^2)} \leqslant 
C \|\hat{\psi}\|_{L^{\infty}(S^2)} \|\hat\psi\|_{W^{2,p}} . 
\end{equation} 

We next estimate $\| J\hat{\psi} \|_{L^p}$ by 
estimating the $L^p$ norm of the right hand side of \eqref{eqnpsih}. 

From \eqref{eq:f2bd}, \eqref{eq:gradlogchiexplicit} and \eqref{eq:lbound1} we have, 
\[ 
|f_2| \leqslant C(\al - 1) \, (\sup |\nabla \log \chi_{\l}|) \, |\nabla v| \leqslant C(\al - 1) (\log \l) |\nabla v|, 
\] 
where we have used $(\l - 1) \leqslant C (\log \l)$ which holds 
because of the bound \eqref{eq:lbound1} on $\l$. 
Therefore, 
\begin{equation} \label{eq:f2bd2} 
\|f_2^T\|_{L^p(S^2)} \leqslant C(\al - 1) (\log \l) \|\nabla v\|_{L^p(S^2)}. 
\end{equation} 

To estimate $\|f_1\|_{L^p(S^2)}$ we recall that 
\[ 
|\nabla v|^2 = |\nabla Id|^2 + 2 \langle \nabla Id , \nabla \psi \rangle + |\nabla \psi |^2 
= 2 + 2\Div \psi +|\nabla \psi|^2
\] 
and therefore, 
\[ 
\big| \nabla (|\nabla v|^2) \big| \leqslant C \, |\nabla^2 \psi| \, (1 + |\nabla v|).
\] 
It follows from \eqref{def.fi} and the estimate
\begin{align*}
\frac{\chi_\l|\nabla v|(1+|\nabla v|)}{2+\chi_\l |\nabla v|^2} 
\leqslant \frac12 \sqrt{\chi_\l} +1 \leqslant 1 + \l \leqslant C
\end{align*}
that 
\begin{equation}\label{eq:f1bd2} 
|f_1| \leqslant C (\al - 1) |\nabla^2 \psi| \left( 
\frac{\chi_{\l} | \nabla v| (1 + |\nabla v|)}{2 + \chi_{\l} |\nabla v|^2} \right) 
\leqslant C  (\al - 1) |\nabla^2 \psi| 
\end{equation} 
where we have used $\chi_{\l} < \l^2$ and the bound \eqref{eq:lbound1} on $\l$. 

Using these bounds on $f_1$ and $f_2$ and \eqref{eq:page14} in \eqref{eqnpsih}, 
keeping in mind that $\|\nabla v\|_{L^p(S^2)}$ is bounded by the energy of $v$, 
we see, also using \eqref{eq:orth2}, that 
\begin{align*} 
\|\hat\psi\|_{W^{2,p}} & \leqslant C (\| J\hat{\psi} \|_{L^p} + \| \hat{\psi}_0 \|_{L^p}) \\ 
&\leqslant C \|\hat{\psi}\|_{L^{\infty}(S^2)} \|\nabla \hat{\psi}\|_{L^p(S^2)} 
+ C(\al - 1) \big(\| \nabla^2 \hat{\psi} \|_{L^p(S^2)} + (\log \l) \big) \\ 
& \hphantom{4 \|\hat{\psi}\|_{L^{\infty}(S^2)} \|\nabla } 
+ C \|\nabla \hat{\psi}\|_{L^{2p}(S^2)}^2 + C \|\hat{\psi}\|_{L^{\infty}(S^2)} \|\hat\psi\|_{W^{2,p}} . 
\end{align*} 
We now appeal to the Gagliardo-Nirenberg interpolation inequality 
\[ 
\| \nabla \hat{\psi} \|_{L^{2p}(S^2)}^2 \leqslant 
C  \| \nabla \hat{\psi} \|_{L^2(S^2)} \| \nabla \hat{\psi} \|_{W^{1,p}(S^2)} 
\] 
and use \eqref{eq:sup2p} with $\delta_0$ and $\al_0 - 1$ sufficiently small, to conclude that 
\begin{equation} \label{eq:betterest} 
\|\hat\psi\|_{W^{2,p}} \leqslant C(\al - 1)(\log \l). 
\end{equation} 

\section{Proof of Theorem \ref{t:main}} \label{s:pf}
We start with a classification result for $\al$-harmonic maps of degree $0$ with ``small" energy.
\begin{proposition} \label{p:deg0} 
Fix $\eta > 0$. Then there exists $\overline{\al}-1 > 0$  small, 
$\overline{\al}$ depending only on $\eta$, 
such that if $1 < \al \leqslant \overline{\al}$ and $u \colon S^2 \to S^2$ is $\al$-harmonic, 
of degree zero and $E(u) \leqslant 8 \pi - \eta$, then $u$ is constant. 
\end{proposition}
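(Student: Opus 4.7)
The strategy is a contradiction argument combined with the Sacks--Uhlenbeck compactness theorem and a rigidity analysis near constant maps. Suppose the conclusion fails; then there exist sequences $\al_n \downarrow 1$ and non-constant $\al_n$-harmonic maps $u_n \colon S^2 \to S^2$ of degree zero with $E(u_n) \leqslant 8\pi - \eta$. By the main compactness theorem in \cite{sacks81}, a subsequence converges smoothly on $S^2 \setminus \{p_1,\dotsc,p_\ell\}$ to a harmonic map $u_\infty$, with a non-constant harmonic bubble $\omega_i\colon S^2\to S^2$ forming at each concentration point $p_i$. Furthermore, degrees are preserved, $0 = \deg(u_\infty) + \sum_i \deg(\omega_i)$, and one has the lower energy identity $\liminf_{n\to\infty} E(u_n) \geqslant E(u_\infty) + \sum_i E(\omega_i)$.

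The next step is to rule out any non-trivial scenario in the limit. By Theorem~\ref{harm:rat}, $u_\infty$ and each $\omega_i$ are rational or anti-rational maps, so $E(u_\infty) = 4\pi|\deg(u_\infty)|$ and $E(\omega_i) = 4\pi|\deg(\omega_i)| \geqslant 4\pi$. Because the degrees sum to zero, in any scenario in which there is at least one bubble or a non-constant limit one has $|\deg(u_\infty)| + \sum_i |\deg(\omega_i)| \geqslant 2$ (cancellation of a non-zero degree requires at least two non-zero contributions), hence $E(u_\infty) + \sum_i E(\omega_i) \geqslant 8\pi$, contradicting $E(u_n) \leqslant 8\pi - \eta$. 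Therefore $\ell = 0$ and $u_\infty$ is a constant map $p_0$, and $u_n \to p_0$ smoothly on all of $S^2$.

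It remains to show that $u_n$ is actually constant for $n$ large. Writing $u_n = p_0 + \psi_n$ with $\psi_n \to 0$ in $C^\infty$, the constraint $|u_n|^2 = 1$ forces $p_0\cdot\psi_n = -\tfrac12|\psi_n|^2$, so the tangential part $\psi_n^\top := \psi_n - (p_0\cdot\psi_n)p_0 \in T_{p_0}S^2$ has the same size as $\psi_n$. Projecting \eqref{EL3} onto $T_{p_0}S^2$ yields
\begin{equation*}
\Delta \psi_n^\top = O\bigl(|\psi_n^\top|\,|\nabla\psi_n^\top|^2\bigr) + O\bigl((\al_n-1)|\nabla\psi_n^\top|^2\,|\nabla^2\psi_n^\top|\bigr),
\end{equation*}
whose linearisation at $\psi_n^\top = 0$ is the componentwise Laplacian on $T_{p_0}S^2$-valued maps. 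Its kernel is the $2$-dimensional space of constant tangent vectors, corresponding exactly to the $2$-parameter family of constant maps near $p_0$. I would then either apply the implicit function theorem uniformly in $\al$ close to $1$ after fixing a gauge which eliminates this kernel (for example, replacing $p_0$ by the unit vector in the direction of $\int_{S^2} u_n\,dA_{S^2}$ so as to enforce $\int_{S^2}\psi_n^\top\,dA_{S^2} = 0$), or, equivalently, run a blow-up argument: rescale $\psi_n^\top$ by its $C^2$-norm, extract a limit $\phi_\infty$, observe that it satisfies $\Delta_{S^2}\phi_\infty = 0$ tangentially at $p_0$ so is a constant vector, and kill this constant via the gauge choice to contradict the normalisation $\|\phi_\infty\|_{C^2}=1$.

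The main obstacle is this last rigidity step: one must verify that the implicit function theorem (or the blow-up argument) can be carried out uniformly as $\al\to 1$, and that no additional Jacobi direction appears at $p_0$ beyond the constants, in contrast to the analysis at a rotation carried out in Sections~\ref{s:lbound}--\ref{s:betterclosw2p} where the conformal directions of $PSL(2,\C)$ contribute to the kernel and require a separate bound on $\l$. Everything else is standard bubble-tree bookkeeping combined with the Wood--Lemaire classification of harmonic $2$-spheres.
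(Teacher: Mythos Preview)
Your overall strategy is sound, but both of your two main steps are handled more simply in the paper, and in one place you invoke more than the cited reference actually provides.

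For the bubble-exclusion step, you use the identity $0 = \deg(u_\infty) + \sum_i \deg(\omega_i)$. This is true for sequences of $\al$-harmonic maps, but it is \emph{not} in the original Sacks--Uhlenbeck paper; it requires the later neck analysis (no energy loss) to know that the degree cannot leak into the necks. The paper sidesteps this entirely. Since the energy bound $8\pi - \eta$ allows at most one bubble, and that bubble must have energy $<8\pi$, hence degree $\pm 1$, the paper simply takes a shrinking disc $D_j$ around the bubble point and computes $\int_{S^2} J(u_j) = \int_{D_j} J(u_j) + \int_{S^2\setminus D_j} J(u_j)$ directly: the first term is close to $4\pi$ by $C^1$-convergence of the rescaled maps to the bubble, and the second is bounded in absolute value by $\tfrac12\int_{S^2\setminus D_j}|\nabla u_j|^2 < 4\pi - \tfrac{\eta}{2}$. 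This gives $\deg(u_j) > 0$ without any degree identity.

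For the rigidity step, the entire linearisation/implicit-function-theorem argument you sketch is unnecessary: Theorem~3.3 of Sacks--Uhlenbeck \cite{sacks81} already states that there exist $\ep>0$ and $\al_0>1$ such that any $\al$-harmonic map with $1\leqslant\al<\al_0$ and $E(v)<\ep$ is constant, uniformly in $\al$. Once you know $u_n \to p_0$ smoothly, $E(u_n)\to 0$ and this theorem applies immediately. Your worry about ``additional Jacobi directions'' is unfounded here (the linearisation at a constant is the scalar Laplacian on $T_{p_0}S^2$-valued functions, with kernel exactly the constants), but the point is moot since the result is already in the literature you cite.
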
 

\begin{proof} 
If the proposition is not true, then we can find a sequence $\al_j \searrow 1$ 
and a sequence of non-constant maps $u_j \colon S^2 \to S^2$ such that 
$\deg(u_j) = 0, \ u_j$ is $\al_j$-harmonic and $E(u_j) \leqslant 8 \pi - \eta \ \forall \, j \in \N$. 
By the results of Sacks-Uhlenbeck \cite{sacks81} we know that two possibilities can occur: 
\begin{enumerate} 
\item[(i)] $u_j$ converges smoothly to a harmonic map $u^* \colon S^2 \to S^2$ of degree zero which is therefore constant, or
\item[(ii)] there exist two harmonic maps $u^* \colon S^2 \to S^2$ 
and $u^B \colon S^2 \to S^2$ and a point $p \in S^2$ such that, 
a subsequence of $u_j$ (still denoted by $u_j$) converges smoothly 
on compact subsets of $S^2 \setminus \{p\}$ to $u^*$ and 
a nontrivial bubble $u^B$ develops at $p$. Since $E(u^B) < 8 \pi$ 
we have $|\deg(u^B)| = 1$. By choosing the orientation of 
the domain $S^2$ relative to that of the image $S^2$ appropriately, 
we may, and we will, assume that $4 \pi \deg(u^B) = E(u^B) = 4 \pi$. 
(It follows that $u^*$ is constant, but this is not of direct importance to us.) 
\end{enumerate} 
In case (i), $E(u_j) \to 0$ as $j \to \infty$. But then, by Theorem 3.3 
in Sacks-Uhlenbeck \cite{sacks81}, 
there exists $\ep > 0$ and $\al_0 > 1$ such that, 
if $v$ is $\al$-harmonic, $1 \leqslant \al < \al_0$ and $E(v) < \ep$ 
then $v$ is constant. In particular, $u_j$ is constant for large enough $j$, 
contrary to our assumption. 

In case (ii), we can find a sequence $D_j$ of discs centred at $p$, 
whose radii $r_j$ decrease to $0$ and a sequence $\sigma_j \searrow 0$ 
such that $\sigma_j/ r_j \uparrow +\infty$ and, if 
\[ 
v_j(z) := u_j(r_jz), \quad |z| < \sigma_j /r_j, 
\] 
then 
\[ 
\sup_{|z| < \sigma_j/ r_j} (|v_j(z) - u^B(z)| + |\nabla v_j(z) - \nabla u^B(z)|) \to 0 
\ \ \text{as} \ \ j \to \infty. 
\] 
In particular, 
\[ 
\int_{D_j} J(u_j) \, dA_{S^2} \to 4 \pi \deg(u^B) = 4 \pi \ \ \text{as} \ \ j \to \infty 
\] 
and 
\[ 
\int_{D_j} |\nabla u_j|^2 \, dA_{S^2} \to \int_{S^2} |\nabla u^B|^2 \, dA_{S^2} = 8 \pi 
\ \ \text{as} \ \ j \to \infty. 
\] 

But then, for large enough $j$, 
\begin{align*} 
\int_{S^2} J(u_j) \, dA_{S^2} &= \int_{D_j} J(u_j) \, dA_{S^2} + 
\int_{S^2 \setminus D_j} J(u_j) \, dA_{S^2} \\ 
&\geqslant (4 \pi - \tfrac14 \eta) - 
\frac12 \int_{S^2 \setminus D_j} |\nabla u_j|^2 \, dA_{S^2} \\ 
&\geqslant (4 \pi - \tfrac14 \eta) - \big((8 \pi - \eta) - (4 \pi - \tfrac14 \eta)\big) \\ 
& = \tfrac12 \eta > 0. 
\end{align*} 
Therefore, for large enough $j, \ u_j$ has nonzero degree, which is again contrary to our assumption. 
\end{proof}

\begin{proof}[Proof of Theorem \ref{t:main}]
Since we have Proposition \ref{p:deg0} at our disposal, 
we only need to classify the $\al$-harmonic maps of degree $1$ 
which satisfy the assumptions of Theorem \ref{t:main}. 

In order to do this, we go back to the proof of Proposition \ref{p:diff5}, 
using our improved estimate \eqref{eq:betterest} to obtain 
\[ 
\|\nabla \psi\|_{L^{2 \al + 2}(S^2)} \leqslant C (\al - 1) (\log \l). 
\] 
The string of inequalities in \eqref{eq:lbound2} now becomes 
\[ 
C'^{\,-1} (\al - 1) \frac{\log \l}{1+ \log \l} \leqslant \frac{d}{d \log \l}E_{\al,\l}(Id) \leqslant 
C(\al - 1)^2(\log \l). 
\] 
By demanding that $\al$ be suffciently close, but not equal, to 1, we conclude that $\l = 1$. 
But, by \eqref{eq:betterest} this implies that $\hat{\psi}$ must vanish, that is, 
$v$ is the identity and the M\"{o}bius transformation $M$ 
which minimizes $\| \nabla (u_M - Id) \|^2_{L^2(S^2)}$ must be a rotation. 
So $u$ is a rotation, as claimed. 
\end{proof} 

\section{Other $\al$-harmonic maps of degree 1} \label{s:other} 
In this section we shall construct rotationally symmetric $\al$-harmonic maps of degree 1 
that are not rotations. Of course, their $\al$-energy will be strictly bigger than $2^{2 \al + 1} \pi$.
We shall also construct $\al$-harmonic maps of degree 1 from the disk to the sphere 
which map the boundary circle to a point. 
This was proved to not be possible for a harmonic map by Lemaire (see, for instance, (12.6) in \cite{eelm}). 
We shall further construct a map of degree 1 from the annulus to the sphere 
which is $\al$-harmonic and which maps the boundary circles to antipodal points. 

\subsection{Rotationally symmetric maps} 
For $n \in \N, \ r \in [n\pi, (n+1)\pi]$ and $\theta \in [0,2\pi]$, 
we consider a parameterisation of $S^2$ given by 
\[ 
(r,\theta) \mapsto (\sin r \, \cos \theta, \ \sin r \, \sin \theta, \ \cos r). 
\] 
This parameterisation is orientation preserving if $n$ is even and orientation reversing if $n$ is odd. 
In these coordinates, the metric on $S^2$ is given by 
\[ 
dr^2 + (\sin r)^2 d \theta^2. 
\] 

We shall be interested in maps $u_f$ from $S^2$ to itself which are of the form 
\[ 
(r,\theta) \mapsto (\sin (f(r)) \cos \theta, \ \sin (f(r)) \sin \theta, \ \cos (f(r)))
\]
with
\[
f \colon [0,\pi] \to \R, \ f(0) = 0, \ f(\pi) = n \pi. 
\] 
These maps are rotationally symmetric and, for $n > 1$, wrap over $S^2$ more than once; 
the degree is zero if $n$ is even and one if $n$ is odd. 
The energy density $e(u_f)$ of such a map is given by 
\[ 
e(u_f) = \frac12 \left((f')^2 + \frac{(\sin f)^2}{(\sin r)^2}\right) 
\] 
and, in order to express the $\al$-harmonic map operator \eqref{EL3} at $u_f$, we compute: 
\begin{gather*} 
\frac{\partial u_f}{\partial r} = f'(r)\big(\cos (f(r)) \cos \theta, \ \cos (f(r)) \sin \theta, \ -\sin (f(r))\big), \\ 
\frac{\partial u_f}{\partial \theta} = \big(- \sin (f(r)) \sin \theta, \ \sin (f(r)) \cos \theta, \ 0\big), \\ 
\frac{\partial^2 u_f}{\partial r^2} = \frac{f''(r)}{f'(r)} \frac{\partial u_f}{\partial r} - (f'(r))^2 u_f, \\ 
\frac{\partial^2 u_f}{\partial \theta^2} = - \sin (f(r)) (\cos \theta, \ \sin \theta, \ 0) = 
- \sin (f(r)) \left(\sin(f(r)) u_f + \frac{\cos(f(r))}{f'(r)} \frac{\partial u_f}{\partial r}\right) .
\end{gather*} 
The Laplacian writes as $\Delta = \frac{\partial^2}{\partial r^2} + \frac{\cos r}{\sin r} \frac{\partial}{\partial r} + 
\frac{1}{(\sin r)^2} \frac{\partial^2}{\partial \theta^2}$ and so, 

\begin{align*} 
\Delta u_f &+ |\nabla u_f|^2 u_f + 
(\al-1) (2+|\nabla u_f|^2)^{-1} \nabla (|\nabla u_f|^2) \cdot \nabla u_f \\[2\jot] 
&\qquad = \frac{f''(r)}{f'(r)} \frac{\partial u_f}{\partial r} - (f'(r))^2 u_f + 
\frac{\cos r}{\sin r} \frac{\partial u_f}{\partial r} \\
&\qquad\quad- \frac{\sin (f(r))}{(\sin r)^2} \left(\sin(f(r)) u_f + 
\frac{\cos(f(r))}{f'(r)} \frac{\partial u_f}{\partial r}\right) \\[2\jot] 
&\qquad\quad + \left((f')^2 + \frac{(\sin f)^2}{(\sin r)^2}\right)u_f + 
\frac{(\al - 1)}{(2+|\nabla u_f|^2)} \frac{\partial |\nabla u_f|^2}{\partial r} \frac{\partial u_f}{\partial r} \\[2\jot] 
&\qquad = \frac{1}{f'(r)} \frac{\partial u_f}{\partial r}(f''(r) + \frac{\cos r}{\sin r} f'(r) - 
\frac{(\cos f(r)) (\sin f(r))}{(\sin r)^2}\\
&\qquad\quad  + \frac{(\al - 1)}{(2+|\nabla u_f|^2)} \frac{\partial |\nabla u_f|^2}{\partial r} ). 
\end{align*} 
Thus $u_f$ is $\al$-harmonic if 
\begin{equation}\label{eq:uf} 
f''(r) + \frac{\cos r}{\sin r} f'(r) - \frac{(\cos f(r)) (\sin f(r))}{(\sin r)^2} + 
\frac{(\al - 1)}{(2+|\nabla u_f|^2)} \frac{\partial |\nabla u_f|^2}{\partial r} = 0. 
\end{equation} 

\subsection{Construction of rotationally symmetric $\al$-harmonic maps} 
We shall specialise to the case $n=3$ 
(though our arguments will work for any other integer value of $n$) and we define 
\[ 
X := \{ f \colon [0,\pi] \to \R : u_f \in W^{1,2\al}(S^2, \R^3), \ \ f(0) = 0, \ f(\pi) = 3 \pi\}. 
\] 
Let $\Lambda := \inf_{f\in X}I(f)$ where 
\[ 
I(f) := \Ea(u_f) = \pi \int_0^{\pi} \left(2 + (f')^2 + \frac{(\sin f)^2}{(\sin r)^2}\right)^{\al} \sin r \, dr . 
\] 
A direct calculation shows that $f \in X$ is a critical point of $I$ if, and only if, 
$u_f$ is an $\al$-harmonic map, i.e., if, and only if, $f$ satisfies \eqref{eq:uf}. 
This is a manifestation of the principle of symmetric criticality of Palais; see, for example, 
Remark 11.4(a) in \cite{amma}. The symmetry group in question here is the group $O(2)$ 
of the rotations about the axis $(0,0,z)$ and reflections in planes containing the line $(0,0,z)$. 

If $f_j$ is a sequence in $X$, we shall write $u_j$ instead of $u_{f_j}$. 
Let $f_j$ be a sequence in $X$ such that $I(f_j) \downarrow \Lambda$. Then 
$u_j$ is a bounded sequence in $W^{1,2\al}(S^2, \R^3)$ and therefore, 
a subsequence, still denoted by $u_j$, converges weakly in $W^{1,2\al}(S^2, \R^3)$ 
and uniformly in $C^0(S^2, \R^3)$ to $u^* := u_{f^*}$ for some $f^* \in X$.\footnote{This 
uniform convergence in $C^0$ fails when $\al =1$ and this is precisely why this construction 
does not yield harmonic maps of the type considered in this section.} 
By the lower semi-continuity of $\Ea$ with respect to weak convergence in $W^{1,2\al}(S^2, \R^3)$, 
we have that $I(f^*) = \Ea(u^*) = \Lambda$. Thus $u^*$ is an $\al$-harmonic map of degree 1 
which is not a rotation. We get a lower bound on $\Ea(u^*)$ by arguing as in \eqref{est1} and \eqref{est2}: 
\begin{align*} 
\Ea(u^*) &= \pi \int_0^{\pi} \left(2 + (f^*\mbox{}')^2 + \frac{(\sin f^*)^2}{(\sin r)^2}\right)^{\al} \sin r \, dr \\[2\jot] 
&\geqslant \pi \left(\int_0^{\pi} \left(2 + (f^*\mbox{}')^2 + \frac{(\sin f^*)^2}{(\sin r)^2}\right) \sin r \, dr \right)^{\al} 
 \left(\int_0^{\pi} \sin r \, dr \right)^{1 - \al} \\[2\jot] 
&\geqslant 2^{1 - \al} \pi \left(\int_0^{\pi} \left(2 \sin r + 2 |f^*\mbox{}'(\sin f^*)| \right) \, dr \right)^{\al}. 
\end{align*} 
There exist $r_1, \, r_2 \in (0, \pi)$ such that $f^*(r_1) = \pi$ and $f^*(r_2) = 2 \pi$. 
Then 
\begin{align*} 
\int_0^{\pi} |f^*\mbox{}'(\sin f^*)| \, dr &\geqslant \int_0^{r_1} f^*\mbox{}'(\sin f^*) \, dr - 
\int_{r_1}^{r_2} f^*\mbox{}' (\sin f^*) \, dr + \int_{r_2}^{\pi} f^*\mbox{}' (\sin f^*) \, dr \\ 
&= \left.-\cos f^*(r)\right|_0^{r_1} + \left.\cos f^*(r)\right|_{r_1}^{r_2} - \left.\cos f^*(r)\right|_{r_2}^{\pi} \\ 
&= 6. 
\end{align*} 
It follows that 
\[ 
\Ea(u^*) \geqslant  2^{3 \al + 1} \pi. 
\] 

Let $D_1$ be the geodesic disc in $S^2$ of radius $r_1$ and centred at $(0,0,1)$, 
let $D_2$ be the geodesic disc in $S^2$ of radius $r_2$ and centred at $(0,0,-1)$ 
and let $A$ be the annulus between $D_1$ and $D_2$. 
Then the restriction of $u^*$ to $D_1$ is an $\al$-harmonic map of degree 1 onto all of $S^2$ 
which maps all of the boundary of $D_1$ to $(0,0,-1)$. 
Similarly, the restriction of $u^*$ to $A$ is an $\al$-harmonic map of degree 1 onto all of $S^2$ 
which maps the two boundaries of $A$ to antipodal points of $S^2$. 

\appendix

\section{An Estimate for the function $\chi_\l$}
\begin{lemma}\label{l:intchi}
There is a constant $C>0$, independent of $\l \geqslant 1$, such that 
\begin{equation}\label{eq:gradlogchi}
  \| \nabla \log \chi_\l \|_{L^2(S^2)} \leqslant 
  \begin{cases}
  C (\log \l)  & \hbox{ for } 0 \leqslant \log \l \leqslant 1; \\ 
  C (\log \l)^{\frac{1}{2}}  & \hbox{ for }  \log \l \geqslant 1.
  \end{cases}
\end{equation} 
\end{lemma}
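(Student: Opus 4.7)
The plan is to compute $\nabla \log \chi_\l$ explicitly in stereographic coordinates and then evaluate (or estimate) the resulting integral by a change of variable, exploiting the fact that the Dirichlet energy on a surface is conformally invariant.

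First, from $\log \chi_\l(\z) = 2\log(1+\l^2|\z|^2) - 2\log \l - 2\log(1+|\z|^2)$, a direct computation gives
\[
\nabla_0 \log \chi_\l(\z) = 4\left(\frac{\l^2 \z}{1+\l^2|\z|^2} - \frac{\z}{1+|\z|^2}\right) = \frac{4(\l^2-1)\z}{(1+\l^2|\z|^2)(1+|\z|^2)},
\]
where $\nabla_0$ is the Euclidean gradient on $\C$. Since Dirichlet energy is conformally invariant in dimension two, applying the formulas in Section \ref{s:mob} for $|\nabla_{S^2} f|^2 dA_{S^2} = |\nabla_0 f|^2 dA_0$ yields
\[
\int_{S^2} |\nabla_{S^2} \log \chi_\l|^2 \, dA_{S^2} = 32 \pi (\l^2-1)^2 \int_0^\infty \frac{r^3 \, dr}{(1+r^2)^2(1+\l^2 r^2)^2}.
\]
Call this integral $I(\l)$. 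Passing to explicit form will be the only computational content of the proof; what remains is to bound $I(\l)$ in each regime.

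For $0 \leqslant \log \l \leqslant 1$, we have $\l \in [1,e]$ so that $(\l^2 - 1)^2 \leqslant C(\log \l)^2$ while $I(\l)$ is uniformly bounded (the integrand is dominated by the integrable function $r^3/(1+r^2)^4$). This gives $\|\nabla \log \chi_\l\|_{L^2(S^2)}^2 \leqslant C(\log \l)^2$, which is the first estimate in \eqref{eq:gradlogchi}. For $\log \l \geqslant 1$, I would split $I(\l)$ as $\int_0^1 + \int_1^\l + \int_\l^\infty$ and estimate the integrand crudely by replacing each factor with its dominant size on the corresponding interval:
\[
\int_0^1 \frac{r^3 \, dr}{(1+r^2)^2(1+\l^2 r^2)^2} \leqslant \frac{C}{\l^4}, \qquad \int_1^\l \frac{r^3 \, dr}{(1+r^2)^2(1+\l^2 r^2)^2} \leqslant \int_1^\l \frac{dr}{r \, \l^4} = \frac{\log \l}{\l^4},
\]
\[
\int_\l^\infty \frac{r^3 \, dr}{(1+r^2)^2(1+\l^2 r^2)^2} \leqslant \int_\l^\infty \frac{dr}{\l^4 r^5} = \frac{C}{\l^8}.
\]
Summing and multiplying by $(\l^2-1)^2 \leqslant \l^4$ gives $\|\nabla \log \chi_\l\|_{L^2(S^2)}^2 \leqslant C \log \l$, which is the second estimate in \eqref{eq:gradlogchi}.

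There is no serious obstacle here; the only delicate point is ensuring that the $(\log \l)^{1/2}$ (rather than $\log \l$) rate for large $\l$ really comes out, and the split above shows it does because the middle interval $[1,\l]$ is the unique one producing a logarithmic factor.
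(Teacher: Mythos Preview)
Your approach is essentially the same as the paper's: compute $\nabla\log\chi_\l$ explicitly and estimate the resulting radial integral by splitting it into pieces. The paper splits at $r=1/\l$ and $r=1$, which is a bit more natural since these are the scales at which the two factors $(1+\l^2r^2)$ and $(1+r^2)$ change behaviour; your split at $r=1$ and $r=\l$ works too, but one of your piecewise bounds is misstated.

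Specifically, the claim $\displaystyle\int_0^1 \frac{r^3\,dr}{(1+r^2)^2(1+\l^2r^2)^2} \leqslant \frac{C}{\l^4}$ is not correct: on the sub-interval $[1/\l,1]$ one has $(1+r^2)\leqslant 2$ and $(1+\l^2r^2)\leqslant 2\l^2r^2$, so the integral is bounded \emph{below} by $\frac{1}{16\l^4}\int_{1/\l}^1 \frac{dr}{r} = \frac{\log\l}{16\l^4}$. The sharp upper bound for this piece is therefore $C\log\l/\l^4$, not $C/\l^4$. This does not damage your conclusion, since after multiplying by $(\l^2-1)^2\leqslant\l^4$ you still obtain $C\log\l$, matching the middle piece. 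Just correct the stated bound (or, more cleanly, split at $1/\l$ and $1$ as the paper does, which isolates the single logarithmic contribution on $[1/\l,1]$).
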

\begin{proof}
First of all we note that
\begin{equation}\label{eq:gradlogchiexplicit}
\frac{d}{dr} \log \chi_\l(r) 
 = 4 \left( \frac{\l^2 r}{1+\l^2r^2} - \frac{r}{1+r^2} \right) 
 = \frac{4r(\l^2-1)}{(1+r^2)(1+\l^2r^2)}, 
\end{equation}
and hence we estimate
\begin{align*} 
\| \nabla \log \chi_\l \|_{L^2(S^2)} = 
 & 4(\l^2-1) \left( 8 \pi 
   \int_0^\infty \frac{r^3}{(1+\l^2r^2)^2(1+r^2)^4} \, dr 
   \right)^{1/2} \\
 \leqslant & 4(\l^2-1) (8 \pi)^{1/2} \\ & \qquad \left( 
   \int_0^{1/\l} r^3 dr + 
   \frac{1}{\l^4} \int_{1/\l}^1 \frac{1}{r} \, dr + 
   \frac{1}{\l^4} \int_1^\infty \frac{1}{r^9} \, dr 
   \right)^{1/2}. 
\end{align*} 
So, 
\[ 
\| \nabla \log \chi_\l \|_{L^2(S^2)} \leqslant 
4(8 \pi)^{1/2}  \left(\frac{\l+1}{\l}\right) 
 \left(\frac{\l - 1}{\l}\right) 
 \left(\frac{1}{4} + \frac{1}{8} + \log \l\right)^{1/2}. 
\] 
Now, for $1 \leqslant \l \leqslant e$, we have 
\[ 
\frac{\l - 1}{\l} \leqslant \log \l 
\quad\text{and}\quad
(\tfrac14 + \tfrac 18 + \log \l)^{1/2} \leqslant \sqrt{2} 
\] 
and, for $\log \l \geqslant 1$, we have 
\[ 
\frac{\l - 1}{\l} (\tfrac14 + \tfrac 18 + \log \l)^{1/2} \leqslant 
\sqrt{2} (\log \l)^{1/2} 
\] 
which yield the desired estimate \eqref{eq:gradlogchi}. 
\end{proof}

\end{document}